\tikzset{->-/.style={decoration={  markings,  mark=at position #1 with
			{\arrow{>}}},postaction={decorate}}}
\tikzset{-<-/.style={decoration={  markings,  mark=at position #1 with
			{\arrow{<}}},postaction={decorate}}}
\newtheorem{theorem}{Theorem}[section]
\newtheorem{lemma}[theorem]{Lemma}
\newtheorem{corollary}[theorem]{Corollary}
\newtheorem{example}[theorem]{Example}
\newtheorem{proposition}[theorem]{Proposition}
\theoremstyle{remark}
\newtheorem{remark}[theorem]{Remark}
\newtheorem{definition}[theorem]{Definition}
\numberwithin{equation}{section}
\def\add{\operatorname{add}}
\def\Hom{\operatorname{Hom}}
\def\End{\operatorname{End}}
\def\Ext{\operatorname{Ext}}
\def\thick{\operatorname{thick}}
\def\dim{\operatorname{dim}}
\newcommand{\im}{\operatorname{Im}}
\def\U{\mathcal{U}}
\def\V{\mathcal{V}}
\def\A{\mathcal{A}}
\def\T{\mathcal{T}}
\def\F{\mathcal{F}}
\def\B{\mathcal{B}}
\def\H{\mathcal{H}}
\def\D{\mathcal{D}}
\def\db #1{D^b( #1)}
\def\S{\mathbb{S}}
\renewcommand{\k}{\mathbf{k}}
\newcommand{\W}{\mathcal{W}}
\newcommand{\C}{\mathcal{C}}
\renewcommand{\mod}{\operatorname{mod}}
\newcommand{\lten}{\overset{\mathbf{L}}{\otimes}}
\newcommand{\rhom}{\mathbf{R}\mathrm{Hom}}
\newcommand{\lsup}[2]{{\vphantom{{#2}}}^{#1}{#2}}
\newcommand{\ps}{ \lsup{\perp}{\langle S\rangle}}
\title{Serre functor and torsion pairs}
\author{Zhe Han and Ping He$^\ast$}
\dedicatory{Dedicated to Professor Bin Zhu on the occasion of his 60th birthday.}
\address{Hz: School of Mathematics and Statistics
	Henan University 475004 Kaifeng, China}
\email{zhehan@vip.henu.edu.cn}
\address{Hp: Yanqi Lake Beijing Institute of Mathematical Sciences and Applications, 101408 Beijing, China}
\email{pinghe@bimsa.cn}
\date{\today}
\thanks{$^*$ the correspondence author}
\subjclass[2020]{16E35; 18E40; 18E10; 16D90.}
\begin{document}
\begin{abstract}
	Given a torsion pair $(\T,\F)$ in an abelian category $\A$ and its Happel-Reiten-Smal{\o} tilt $\B$, the equivalence of the realization functor $\db\B\to \db\A$ is determined by some properties of the torsion pair \cite{CHZ}. We call $(\T,\F)$ satisfying such a property effaceable. If $\A$ is an Ext-finite abelian category with Serre duality, we prove that $(\T,\F)$ is effaceable implies that $\U_{\T}$ is closed under Serre functor. Conversely, when $\A$ is the module category of a finite-dimensional hereditary algebra, we prove that
	the torsion pair $(\T,\F)$ is effaceable if and only if $\U_\T$ is closed under the Serre functor via a recollement of $\db{\A}$. 
\end{abstract}
\keywords{derived equivalence, Serre functor, torsion pair, HRS-tilting}
	
\maketitle


\section{Introduction}

Given an abelian category $\A$ with a torsion pair $(\T,\F)$, the heart $\B$ of the t-structure $(\U_\T,\V_\T)$ on $\db{\A}$ defined by the torsion pair is an abelian category \cite{HRS}. We call the category $\B$ Happel-Reiten-Smal{\o} tilt (HRS-tilt) of $\A$ with respect to $(\T,\F)$. By \cite{BBD}, there is a realization functor $\Phi\colon \db\A\to \db \B$ which extends the embedding $\B\to \db\A$. In general, the realization functor $\Phi$ is not an equivalence. There are some criteria on $(\T,\F)$ for the functor $\Phi$ being an equivalence \cite{CHZ}.
Given a t-structure on a triangulated category $\D$, the Serre functor plays an important role in the relation between the derived category of the heart and $\D$ \cite{SR16}.  In this paper, we consider how the aisle $\U_\T$ closed under the Serre functor relates to the property of $(\T,\F)$.

We call a torsion pair $(\mathcal T,\mathcal F)$ of $\mathcal A$ is \emph{effaceable} if for each object $X$ in $\mathcal A$, there is an exact sequence 
$0\to F_1\to F_2\to X\to T_1\to T_2\to 0$ with $T_i\in \T$ and $F_i\in \F$ such that the corresponding class in $\Ext^3_{\A}(T_2,F_1)$ vanishes. We shall show that this definition is equivalent to the existence of the epimorphism $C\to F[1]$ in $\B$ such that the composition $C\to F[1]\to T[2]$ is zero for any $T\in \T$ and $F\in \F$. According to \cite[Theorem~A]{CHZ}, the realization functor $\Phi$ is an equivalence if and only if $(\T,\F)$ is an effaceable torsion pair. Assume that the derived category $\db\A$ admits a Serre functor $\mathbb S$. The functor $\Phi$ being an equivalence is closely related to the aisle $\U$ satisfying $\S\U\subseteq \U$. Actually, they are equivalent for the module category $\A$ of a finite-dimensional hereditary algebra \cite{SR16}.

In this paper, we consider how the Serre functor affects the equivalence of the realization functor $\Phi\colon \db{\B}\to \db{\A}$ for HRS-tilt $\B$. For finitely generated  torsion pair $(\T,\F)$, the realization functor $\Phi $ is an equivalence if and only if the aisle $\U_\T$ satisfying that $\S\U_\T\subseteq \U_{\T}$, see \cite[Lemma 4.6]{LVY},\cite[Proposition 5.1]{SR16}. We will prove one direction for any torsion pair of $\A$ with Serre duality.  If $\A$ is a hereditary category, the other direction of the statement is proved via recollement of $\db{\A}$ associated with some exceptional objects. We also show how HRS-tilts are compatible with the recollement, compared to \cite[Section 6]{LVY}.

For a Hom-finite $\k$-linear triangulated category $\D$ with a Serre functor $\S$, and a bounded t-structure $(\D^{\leq 0}, \D^{\geq 0})$ on $\D$ with heart $\H$, we ask whether the following statement holds: a realization functor $\db{\H}\to\D$ is an equivalence if and only if $\D^{\leq 0}$ satisfies $\S \D^{\leq 0}\subseteq \D^{\leq 0}$. The statement holds for the derived category of a hereditary algebra \cite{SR16}. In \cite{S}, the author proved that this statement holds for the derived category of coherent sheaves over a weighted projective line of domestic or tubular type. In general, a bounded t-structure $(\U,\V)$  of $\db{\A}$ that is closed under the Serre functor does not imply there exists a triangle equivalence $\db{\H}\to \db{\A}$, \cite[Section 10]{SR16}. It is interesting to ask:
which kind of the t-structure $(\D^{\leq 0},\D^{\geq 0})$ on the category $\D$ such that the above statement holds. In \cite[Lemma 3.6]{LVY} and \cite[Proposition 5.1]{SR16}, the authors proved that the statement holds for t-structures generated by silting complexes.
We expect to prove the statement holds for any bounded t-structure $(\U,\V)$ of $\db{\A}$ induced by a torsion pair $(\T,\F)$. In Theorem \ref{thm:suff}, we prove one direction for any Ext-finite abelian category with Serre duality. 

\begin{theorem}(Theorem \ref{thm:suffr})\label{thm:suff}
	Let $\A$ be an Ext-finite abelian category with Serre duality. If a torsion pair $(\mathcal T,\mathcal F)$ on $\A$ is effaceable. Then the aisle $\U_\T$ is closed under the Serre functor.
\end{theorem}
We conjecture that if $U_{\T}$ is closed under Serre functor, then the torsion pair $(\mathcal T,\mathcal F)$ on $\A$ is effaceable. We could only prove this for $\A=\mod \Lambda$ where $\Lambda$ is a hereditary algebra based on the recollement of $\db{\A}$ associated to some exceptional object. 

\begin{theorem}\label{thm:main}
Let $\Lambda$ be a finite-dimensional hereditary algebra and $(\T,\F)$ be a torsion pair in $\A=\mod\Lambda$.  Let $\S$ be the Serre functor of  $\db{\A}$. Then the following statements are equivalent.
\begin{enumerate}
    \item The aisle $\U_\T$ of $\db{\A}$ determined by $\T$ satisfies that $\S\U_{\T}\subseteq \U_\T$.
    \item The torsion pair $(\mathcal T,\F)$ is effaceable.
\end{enumerate}
\end{theorem}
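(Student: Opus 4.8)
The plan is to work concretely with the module category $\A=\mod\Lambda$ of a finite-dimensional hereditary algebra, exploiting the fact that $\db\A$ has a very simple structure: every object is a direct sum of shifts of modules, and the Serre functor is $\S = \nu[?]$ with $\nu = D\Lambda \otimes_\Lambda -$ the Nakayama functor. First I would recall the explicit description of the aisle $\U_\T$ attached to the torsion pair $(\T,\F)$: it is generated by $\T$ in degree $0$ and $\F$ in degree $-1$ (equivalently $\U_\T = \{X \in \db\A : H^0(X)\in\T,\ H^i(X)=0 \text{ for } i>0\}$ in the appropriate convention), so that the heart $\B$ consists of two-term complexes with $H^{-1}\in\F$ and $H^0\in\T$. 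The condition $\S\U_\T\subseteq\U_\T$ then unwinds, using heredity (so that $\db\A$ is hereditary and $\Ext^{\geq 2}$ vanishes), into a purely module-theoretic statement about how $\nu$ interacts with $\T$ and $\F$.

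The heart of the argument is a dictionary between the realization functor $F\colon\db\A\to\db\B$ and the combinatorics of these two-term complexes. For the direction (2)$\Rightarrow$(1) I would simply cite \cite{SR16}, or reprove it quickly: completeness gives that $F$ is an equivalence, hence $\S$ transported along $F$ is the Serre functor of $\db\B$, which preserves the standard aisle of $\B$, and pulling back gives $\S\U_\T\subseteq\U_\T$. The substantive direction is (1)$\Rightarrow$(2): assuming $\S\U_\T\subseteq\U_\T$, I must produce, for each $X\in\A$, a five-term exact sequence $0\to F_1\to F_2\to X\to T_1\to T_2\to 0$ with $F_i\in\F$, $T_i\in\T$, whose class in $\Ext^3_\A(T_2,F_1)$ vanishes — and by heredity $\Ext^3_\A=0$ automatically, so the vanishing of the obstruction class is free once the sequence exists. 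Thus the real task reduces to constructing the five-term sequence. The idea is that $\S\U_\T\subseteq\U_\T$ lets me compute $F^{-1}$ of the simple-like objects of $\B$ (or of $X$ viewed in $\B$) as honest complexes concentrated in two degrees; writing out $H^{-1}$ and $H^0$ of the relevant mapping cone and splicing the canonical torsion sequences $0\to tM\to M\to M/tM\to 0$ for the two modules appearing yields exactly the desired five-term complex. Concretely, one takes a $\B$-projective (or a $\B$-injective) presentation adapted to $X$, applies $F^{-1}$, and reads off cohomology.

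In more detail, the key steps in order: (i) fix conventions and record $\S = \nu\circ[1]$ (or $[1]\circ\nu$) and the explicit aisle $\U_\T$; (ii) translate $\S\U_\T\subseteq\U_\T$ into: for every $T\in\T$, the complex $\nu T[1]$ lies in $\U_\T$, i.e. $H^0(\nu T[1])=0$ forces a constraint, and similarly analyze $\nu F$ for $F\in\F$ — this should say something like "$\F$ is closed under $\ker\nu$-type operations / $\T$ is closed under $\im\nu$", phrased via the hereditary derived structure; (iii) using heredity, split any object of $\B$ into its cohomology $H^{-1}\oplus H^{0}[?]$-controlled pieces and show the realization functor sends such a complex to an explicit two-term complex of modules; (iv) for a given $X\in\A$, regard $X$ as an object of $\db\B$ via $F$, take a two-term $\B$-projective resolution, apply $F^{-1}$, and compute the long exact cohomology sequence — the four nonzero terms assemble (after splicing one short exact sequence in $\A$) into the five-term sequence; (v) note $\Ext^3_\A(-,-)=0$ by heredity so the obstruction vanishes, completing (1)$\Rightarrow$(2).

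I expect the main obstacle to be step (iii)–(iv): making the realization functor $F$ (or its quasi-inverse on the relevant subcategory) sufficiently explicit that one can actually compute cohomology of $F^{-1}(X)$. The realization functor is notoriously hard to pin down on the nose, and the whole point of the paper is that $\S\U_\T\subseteq\U_\T$ is precisely what makes this computation tractable in the hereditary case; so the crux is to prove a lemma saying that under this hypothesis $F$ restricted to the subcategory of objects with cohomology in degrees $0$ and $-1$ (suitably placed) agrees with the naive assignment given by taking $H^{-1}$ and $H^0$ and the connecting extension class. Once that lemma is in hand, the five-term sequence drops out by diagram-chasing the octahedron relating the torsion truncation triangles of $X$ in $\A$ and in $\B$, and heredity finishes the job.
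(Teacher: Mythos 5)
Your direction $(2)\Rightarrow(1)$ is reasonable (though it differs from the paper's more direct argument via Lemma~\ref{lem:inj-proj} and Lemma~\ref{lem:trans}, which avoids any appeal to properties of the tilted heart $\B$). Your observation that $\Ext^3_\A$ vanishes automatically in the hereditary case, so that ``completeness'' reduces to the mere existence of the five-term sequence, is correct and is indeed used implicitly throughout the paper. The problem is the direction $(1)\Rightarrow(2)$, where your plan is circular.

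You propose, in steps (iii)--(iv), to ``regard $X$ as an object of $\db\B$ via $F$, take a two-term $\B$-projective resolution, apply $F^{-1}$, and compute the long exact cohomology sequence.'' But the question of whether $X\in\A$ lies in the essential image of the realization functor $F$ is \emph{equivalent} (Proposition~\ref{prop:ff2}, quoting \cite[Prop.~3.2]{CHZ}) to the existence of a triangle $X\to B^0\to B^1\to X[1]$ with $B^i\in\B$, which in turn is exactly the existence of the five-term exact sequence you are trying to build. So ``apply $F^{-1}$ to $X$'' already presupposes the conclusion. There is no intrinsic way to produce $F^{-1}(X)$: the realization functor is constructed abstractly (filtered structures, or DG enhancements), and outside the trivial case of the standard $t$-structure one has no handle on preimages. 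The paper is quite explicit about this in the introduction — it is precisely why the authors say ``there is no explicit construction of the 5-term exact sequence'' from either \cite{CHZ} or \cite{SR16}, and why their main contribution is a construction that \emph{produces} these preimages rather than assuming them. Your auxiliary lemma — that ``$F$ restricted to complexes with cohomology in degrees $0,-1$ agrees with the naive assignment'' — is a genuine missing idea; no such direct comparison is available, and no proof route toward it is offered. Furthermore, taking a two-term $\B$-projective resolution requires $\B$ to have enough projectives, which one cannot assert until one already knows $\B$ is a module category, i.e.\ until $F$ is known to be an equivalence.

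What the paper actually does for $(1)\Rightarrow(2)$ is entirely different, and is where the substance lies. It splits into three cases: (a) if $\T$ is finitely generated, the aisle $\U_\T$ is generated by a presilting object and one obtains a co-$t$-structure $(\V_\T,\S\U_\T)$, from which the five-term sequence falls out of a long exact cohomology sequence (Lemma~\ref{lem:enough}); (b) if $\T$ has no nonzero Ext-projective objects, the wide closure $\a_\A(\T)$ is reflective, $\T$ is tilting in it, all projectives of $\A$ lie in $\F$, and the sequence is built directly from an $\add\T$-approximation and a projective cover (Lemma~\ref{lem:no}); (c) the general case is reduced to (b) by an induction on the number of simple modules, passing to the perpendicular wide subcategory $\W={}^\perp S_\A$ of the simple top $S$ of an Ext-projective in $\U_\T$, and transporting the completeness result back through the twist functors $T_S,T^*_S$ (Section~4 and Proposition~\ref{prop:main}, with a substantial amount of octahedral-axiom diagram chasing in Lemmas~\ref{lem:red-t}--\ref{lem:fint}). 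None of this machinery — wide closures, Ext-projective generators, perpendicular reduction, twist functors — appears in your proposal, and without it you have no way to manufacture the five-term sequence from the hypothesis $\S\U_\T\subseteq\U_\T$ alone.
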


In Section \ref{sec:two}, we recall basic facts about tilting theory and perpendicular categories defined by exceptional objects. The recollement associated with exceptional objects plays an essential role in proving Theorem \ref{thm:main}. In Section \ref{sec:three}, we prove torsion pair $(\T,\F)$ is effaceable implies that the aisle $\U_\T$ is closed under the Serre functor. In Section \ref{sec:sp}, we prove the converse holds if either $\T$ is finitely generated or without any Ext-projective objects, and the factorization of the second extension between two objects in HRS-tilt. In Section \ref{sec:five}, we show how HRS-tilt interplays with the recollement and reduce the general case of $\T$ to the special case $\T$ without any Ext-projective objects.

\section{Preliminary}\label{sec:two}
For any subcategory $\C$ of a triangulated category $\D$, let $\add\C$ be the additive closure of $\C$. For any pair of subcategories $(\C_1,\C_2)$, we write $\C_1*\C_2$ the full subcategory of $\D$ consisting of those objects $Z$ such that there is a triangle $C_1\to Z\to C_2\to C_1[1]$ in $\D$, where $C_1\in\C_1$ and $C_2\in\C_2$. For an abelian category $\A$ and a pair of subcategories $\A_1,\A_2$, $\A_1\ast \A_2$ is the full subcategory consisting of objects 
\[\{X\in \A\mid  0\to A_1\to A\to A_2\to 0, \; A_1\in \A_1,\; A_2\in \A_2\}.\]
  
We denote by $\mod\Lambda$ the category of finite-dimensional right $\Lambda$-modules over a finite-dimensional algebra $\Lambda$.

\subsection{Tilting theory and t-structures}

Recall that a pair $(\T,\F)$ of full subcategories of $\A$ is called a \emph{torsion pair} if $\Hom_\A(T,F)=0$ for any $T\in\T$ and $F\in\F$, and $\A= \T\ast \F$. The subcategory $\T$ is called \emph{torsion class} and the subcategory $\F$ is called a \emph{torsion free class}.

For an abelian category $\A$ and a subcategory $\mathcal C$, an object $M\in\mathcal C$ is called an \emph{Ext-projective} in $\mathcal C$ if $\Ext^i_{\mathcal C}(M,C)=0$ for each $C\in\mathcal C$. A torsion class $\T$ is called \emph{finitely generated} if each object is a quotient of an object in $\add E$ for an Ext-projective object $E$ in $\T$.
We call a torsion pair $(\mathcal T,\mathcal F)$ of $\mathcal A$ is \emph{effaceable} if for each object $X$ in $\mathcal A$, there is an exact sequence 
\begin{equation}\label{eq:seq}
	0\to F_1\to F_2\to X\to T_1\to T_2\to 0
\end{equation}
with $T_i\in \T$ and $F_i\in \F, i=1,2$ such that the corresponding class in $\Ext^3_{\A}(T_2,F_1)$ vanishes. There are some equivalent characterizations of the effaceable torsion pair involving a tilted category, \cite{CHZ}.

\begin{definition}
    A pair $(\U,\V)$ of full subcategories of $\D$ is called a \emph{t-structure} (resp. \emph{co-t-structure}) if 
    \begin{itemize}
    	\item $\U[1]\subseteq\U$ and $\V[-1]\subseteq\V$ (resp. $\U[-1]\subseteq\U$ and $\V[1]\subseteq\V$);
    	\item $\Hom_{\db{\A}}(\U,\V[-1])=0$, and
    	\item for any $X\in\D$ there is a triangle 
    	\[X_\U\to X\to X_{\V[-1]}\to X_\U[1]\]
    	with $X_\U\in\U,X_{\V[-1]}\in\V[-1]$. 
    \end{itemize}
     We call subcategories $\U$ and $\V$ an \emph{aisle} and \emph{coaisle}, respectively. A t-structure $(\U,\V)$ is said to be \emph{bounded} if $\bigcup_{n\in\mathbb{Z}}\U[n]=\D=\bigcup_{n\in\mathbb{Z}}\V[n]$. The \emph{heart} of a t-structure $(\U,\V)$ is the full subcategory $\H=\U\cap \V$.
\end{definition} 

Note that $(\U[n],\V[n]),n\in\mathbb{Z}$ is still a t-structure. For any $X\in\A$, we use $\tau^\H_{\leq n}X$ (resp. $\tau^\H_{\geq n}X$) to denote $X_{\U[-n]}$ (resp. $X_{\V[-n]}$). It is not hard to see that $(\tau^\H_{\leq n}X)[n]=\tau^\H_{\leq0}(X[n])$. The n-homology of $X$ with respect to $\H$ is defined as 
\[H^n_\H(X):=(\tau^\H_{\geq n}\tau^\H_{\leq n}X)[n]\in\H.\] 
When $\D=\db\A$ and $\H$ is taken as the canonical heart $\A$, we omit the subscript and simply write $H^n$ for $H^n_\A$.

\begin{remark}
	co-t-structure are introduced by Pauksztello in \cite{Pa}, weight structure were introduced by Bondarko in \cite{Bo}. They are the same concept for triangulated categories. 
\end{remark}
A bounded t-structure is determined by its heart $\H$,
\[\U=\bigcup_{n\geq 0}\H[n]\ast \cdots \H[1]\ast \H,\quad \V=\bigcup_{n\geq 0} \H\ast\H[1]\ast \cdots \ast \H[-n].\]
Given a triangle functor $F\colon \C\to \D$ between triangulated categories $\C$ and $\D$ with t-structures $(\U_i,\V_i)$ for $i=1,2$, $F$ is called \emph{t-exact} if $F(\U_1)\subseteq \U_2$ and $F(\V_1)\subseteq \V_2$. 

For a bounded t-structure $(\U,\V)$ on a triangulated category $\D$ with heart   $\A$, a \emph{realization functor} is a functor $\Phi\colon \db{\A}\to \D$ which is t-exact and the restriction on $\A$ is identity. In \cite{BBD}, the existence of the realization functor depends on filtered structures over $\mathcal D$. Note that a realization functor exists if $\mathcal D$ is algebraic \cite{KV}. 

For each torsion pair $(\T,\F)$ in $\A$, let
\[\U_{\T}=\{X\in\db\A\mid H^0(X)\in\T,H^n(X)=0\text{ for any } n>0\}\]
and
\[\V_{\T}=\{Y\in\db\A\mid H^{-1}(Y)\in\F, H^n(Y)=0 \text{ for any } n<-1\}.\]
By \cite[Proposition~2.1]{HRS}, $(\U_{\T},\V_{\T})$ is a bounded t-structure in $\db{\A}$. The heart $\B=\F[1]\ast\T$ of $(\U,\V)$ is called the \emph{HRS-tilt} of $\A$. Note that the realization functor $F\colon \db{\B}\to \db \A$ exists for the t-structure $(\U_\T,\V_\T)$ given by a torsion pair $(\T,\F)$ on $\A$.  For more details about tilting theory and the realization functor, refer to \cite{PV}.

Conversely, given a bounded t-structure $(\mathcal{D}^{\leq 0}, \mathcal{D}^{\geq 0})$ on a triangulated category $\D$ with heart $\mathcal H$, each t-structure $(\U,\V)$ with $\D^{\leq0}[1]\subseteq\U\subseteq\D^{\leq 0}$ determines a torsion class $\T:=H^0_{\mathcal H}(\U)$ in $\mathcal H$. The following lemma shows that such a correspondence is a bijection.
\begin{proposition}\label{prop:int}{\cite[Lemma 1.1.2]{P}\cite[Proposition~2.3]{Woo}}
Let $\H$ be the heart of a bounded t-structure $(\mathcal{D}^{\leq 0}, \mathcal{D}^{\geq 0})$ on a triangulated category $\D$. Then there is a bijection between the set of all torsion classes $\T$ of $\H$ and the set of t-structures $(\U,\V)$ of $\D$ satisfying
\[\D^{\leq 0}[1]\subseteq \U\subseteq \D^{\leq 0},\]
given by $\T\mapsto \U_{\T}$.
\end{proposition}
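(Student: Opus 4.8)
The plan is to produce an explicit two-sided inverse to the assignment $\T\mapsto\U_\T$ and reduce the proposition to a short list of verifications, of which essentially one is non-formal. Throughout write $H^i\colon\D\to\H$ for the cohomological functors of $(\D^{\le 0},\D^{\ge 0})$ and $\tau^{\le n},\tau^{\ge n}$ for its truncations; for a torsion pair $(\T,\F)$ in $\H$ set $\V_\T=\{Y\in\D\mid H^i(Y)=0\ \text{for}\ i<0,\ H^0(Y)\in\F\}$, the intended coaisle, and for an intermediate t-structure $(\U,\V)$ set $\T_\U=\U\cap\H$. I would first record that $\T_\U$ agrees with $H^0_\H(\U)$: if $X\in\U$ then $X\in\D^{\le 0}$, and rotating the truncation triangle $\tau^{\le-1}X\to X\to H^0(X)[0]\to(\tau^{\le-1}X)[1]$ exhibits $H^0(X)[0]$ as an extension of $(\tau^{\le-1}X)[1]\in\D^{\le 0}[1]\subseteq\U$ by $X\in\U$, hence $H^0(X)[0]\in\U\cap\H=\T_\U$; conversely every object of $\T_\U$ is its own $H^0$.

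The one genuinely non-formal step is to check that $(\U_\T,\V_\T)$ is indeed a t-structure; it is then automatically intermediate, since $\D^{\le 0}[1]$ consists of complexes with $H^i=0$ for all $i\ge0$ while $\U_\T\subseteq\D^{\le 0}$. Stability of $\U_\T$ under $[1]$ and of $\V_\T$ under $[-1]$, together with $\Hom_\D(\U_\T,\V_\T)=0$, follow from the standard truncation manipulations and $\Hom_\H(\T,\F)=0$. For the approximation triangle of an arbitrary $X\in\D$ I would take the canonical sequence $0\to t\to H^0(X)\to f\to 0$ in $\H$ with $t\in\T$, $f\in\F$, let $X_\U$ be the fibre of the composite $\tau^{\le 0}X\to H^0(X)[0]\to f[0]$, and put $X_\V=\Cone(X_\U\to X)$ where $X_\U\to X$ is the composite $X_\U\to\tau^{\le0}X\to X$. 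Chasing the long exact cohomology sequence of $X_\U\to\tau^{\le0}X\to f[0]$ yields $H^0(X_\U)=t\in\T$, $H^i(X_\U)=H^i(X)$ for $i<0$, and $H^i(X_\U)=0$ for $i>0$, so $X_\U\in\U_\T$; an octahedron on $X_\U\to\tau^{\le0}X\to X$ presents $X_\V$ as an extension of $\tau^{\ge1}X$ by $f[0]$, whence $H^0(X_\V)=f\in\F$, $H^i(X_\V)=H^i(X)$ for $i\ge1$, and $H^i(X_\V)=0$ for $i<0$, so $X_\V\in\V_\T$. Thus $\T\mapsto\U_\T$ lands in the set of intermediate t-structures.

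Next I would run the reverse direction. Dualizing $\D^{\le0}[1]\subseteq\U\subseteq\D^{\le 0}$ through $\U={}^{\perp}\V$, $\V=\U^{\perp}$ gives $\D^{\ge1}\subseteq\V\subseteq\D^{\ge0}$. Then $\T_\U=\U\cap\H$ is closed under extensions (aisles are extension-closed), closed under quotients (for an epimorphism $B\twoheadrightarrow C$ in $\H$ with kernel $A$, the rotated triangle $B\to C\to A[1]\to B[1]$ has $B\in\U$ and $A[1]\in\H[1]\subseteq\D^{\le0}[1]\subseteq\U$), and is the torsion class of the torsion pair $(\T_\U,\V\cap\H)$: the $\U$-approximation triangle $M_\U\to M\to M_\V$ of any $M\in\H$ has all cohomology concentrated in degree $0$ by the two vanishing ranges, hence is a short exact sequence $0\to M_\U\to M\to M_\V\to0$ in $\H$ with $M_\U\in\U\cap\H$ and $\Hom_\H(\U\cap\H,M_\V)\subseteq\Hom_\D(\U,\V)=0$ (the torsion radical $M\mapsto M_\U$ providing the left adjoint to the inclusion, so that $\T_\U$ is reflective). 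Finally the two composites are identities: $\U_\T\cap\H=\T$ is immediate from the definition of $\U_\T$; and $\U_{\T_\U}=\U$ because $X\in\U$ forces $H^i(X)=0$ for $i>0$ and $H^0(X)[0]\in\T_\U$ (so $X\in\U_{\T_\U}$), while conversely for $X\in\U_{\T_\U}$ the triangle $\tau^{\le-1}X\to X\to H^0(X)[0]$ has both ends in $\U$ (the left in $\D^{\le0}[1]\subseteq\U$, the right in $\T_\U\subseteq\U$), so $X\in\U$ by extension-closure. This yields the asserted bijection.

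The hard part is the explicit approximation triangle for $\U_\T$ in the second step: one must splice the canonical torsion sequence of $H^0(X)$ into the truncation tower of the ambient t-structure through an octahedron and then chase several long exact sequences to certify $X_\U\in\U_\T$ and $X_\V\in\V_\T$. Everything else is formal bookkeeping with perpendicularity and with the extension-closure of aisles and coaisles.
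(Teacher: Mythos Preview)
The paper does not give its own proof of this proposition: it is simply quoted from \cite[Proposition~2.3]{Woo}, with the preceding paragraph citing \cite[Proposition~2.1]{HRS} for the fact that $(\U_\T,\V_\T)$ is a t-structure. There is therefore no in-paper argument to compare against.

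Your argument is correct and is precisely the standard one. A few minor remarks: your verification that $H^0(X)[0]\in\U$ for $X\in\U$ is fine, but note that the paper's definition of torsion class requires reflectivity, so it is worth making explicit (as you do parenthetically) that the torsion radical $M\mapsto M_\U$ supplies the left adjoint. In the approximation step, the line ``$\Hom_\D(\U_\T,\V_\T)=0$ follow[s] from the standard truncation manipulations and $\Hom_\H(\T,\F)=0$'' is the only place you wave your hands; the clean way is to filter $X\in\U_\T$ by $\tau^{\le -1}X$ and $H^0(X)[0]\in\T$, filter $Y\in\V_\T$ by $H^0(Y)[0]\in\F$ and $\tau^{\ge 1}Y$, and use that $\Hom_\D(\D^{\le -1},\D^{\ge 0})=\Hom_\D(\D^{\le 0},\D^{\ge 1})=0$. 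Otherwise the octahedral construction of the $\U_\T$-truncation and the check that $\T_\U$ is a torsion class (via the $\U$-truncation of an object of $\H$ landing in $\H$) are exactly what one expects, and the two round-trip identities are handled correctly.
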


\subsection{Perpendicular categories}
For a collection $\mathcal S$ of objects in an abelian category $\A$, the  categories  $\mathcal S^{\perp}$  
\emph{left perpendicular} to $\mathcal S$ are  the full subcategories consisting of all objects $X\in \A$ satisfying
\[\Hom(X,S)=0,\quad \Ext^1(X,S)=0, \quad \forall S\in \mathcal S.\]
The right perpendicular $^{\perp}\mathcal S$ to $\mathcal S$ is defined similarly. If $\A$ is hereditary, then both $\mathcal S^{\perp}$ and $^{\perp}\mathcal S$ are abelian and hereditary \cite[Proposition 1.1]{GL}. We mention that in \cite{J} Jasso defined \emph{$\tau$-perpendicular category}  associated to $U\in\mod\Lambda$ which is the full subcategory of $\mod(\Lambda)$ whose
objects are all $X\in \mod(\Lambda)$ such that
\[\Hom(U,X)=0,\quad \Hom(X,\tau U)=0. \]
If $U$ is a partial tilting module, then the $\tau$-perpendicular category associated to $U$ is exactly the right perpendicular category $\langle U\rangle^{\perp}$.

Let $\mathcal D$ be a triangulated category and $\mathcal S$ a collection of objects in $\D$, the \emph{right perpendicular category} $\mathcal S ^{\perp}$ of $\mathcal S$ is the full subcategory consisting of those objects
\[\{X\in \mathcal D\mid \Hom(Y,X[n])=0, \forall Y\in \mathcal S \text{ and } n\in\mathbb{Z}\}.\]
The \emph{left perpendicular category} $^{\perp}\mathcal S$ is defined similarly. A full triangulated subcategory of $\D$ is called \emph{thick} if it is closed under taking direct summands. For any collection of objects  $\mathcal S$ in $\mathcal D$, we write $\langle \mathcal S \rangle$ for the thick subcategory generated by $\mathcal S$. 
Both $ \mathcal S ^{\perp}$ and $^{\perp}\mathcal S$ are thick subcategories of $\D$. Moreover, $\mathcal S ^{\perp}\cong \langle\mathcal S\rangle ^{\perp}$ and $^{\perp}\mathcal S\cong \ps$. 

\subsection{Exceptional objects and recollement}

\begin{definition}
	A recollement  $(\mathcal C_1,\mathcal C,\mathcal C_2)$ of a triangulated category by another two triangulated categories is a diagram of six
	functors between these categories given by the following six functors:
\begin{equation}\label{eq:rec}
	\begin{tikzcd}
	\C_1
		\arrow[rr, "i_*=i_!"{description}] 
		&& \C
		\arrow[ll,  bend left=30, "i^!"{description}] 
		\arrow[ll, bend right=30, "i^*_{S}"{description}]
		\arrow[rr, "j^*=j^!"{description}]
		& &\C_2
		\arrow[ll, bend left=30, "j_*"{description}] 
		\arrow[ll, bend right=30, "j_!"{description}]
	\end{tikzcd}
\end{equation}		
	
	such that
	\begin{enumerate}
		\item $(i^*, i_*), (i_!, i^!), (j_!, j^*), (j^*, j_*)$ are adjoint pairs;
		\item$i_*, j_*, j_!$ are full embeddings;
		\item $i^! \circ j_* = 0$ ( thus $j^! \circ i_! = 0$ and $i^* \circ j_! = 0$);
		\item For each $C \in  \mathcal C$ there are triangles
		\[i_!i^!(C)\to  C \to j_*j^*(C) \to i_!i^!(C)[1]\]
		\[j_!j^!(C)\to  C \to i_*i^*(C)\to j_!j^!(C)[1]\]
		where the maps are given by adjunctions.
	\end{enumerate}
\end{definition}

We mention that there are extensive studies on the connections between recollement of derived categories and tilting theory, refer to \cite{AKL11}, \cite{LVY}, \cite{LV}, \cite{PV}.

An \emph{exceptional object} in a $\k$-linear triangulated category $\C$ (of finite type) is an object $\Hom_\C(E,E[n])=0$ for $n\neq 0$ and $\End(E)$ is a division ring. A sequence $(E_1,E_2,\ldots,E_n)$ of exceptional objects in $\C$ is an \emph{exceptional sequence} if
\[\Hom(E_j,E_i[\mathbb{Z}])=0,\; 1\leq i<j\leq n.\]

Let $\mathcal A$ be an Ext-finite abelian category of finite global dimension and $S\in\db{\A}$ an exceptional object.
Assume that $B=\End S$, then the functor $-\lten_BS\colon \db{\mod B}\to \db{\A}$ has a right adjoint $\rhom(S,-)$ and a left adjoint $\rhom(-,S)^*$. 

Recall that a subcategory of a triangulated category is called \emph{admissible} if the inclusion functor has both right and left adjoints.
\begin{lemma}\cite{B89}
	The subcategory $\langle S\rangle$ of $\db{\A}$ generated by an exceptional object $S$ is admissible and
	is equivalent to the derived category of vector spaces $\db{\k}$.
\end{lemma}

 The thick subcategory $\langle S\rangle$ generated by $S$ in $\db{\A}$ is
equivalent to $\db{ B}$ which is induced by $-\lten_BS$. The embedding 
$i\colon \langle S\rangle\to \db{\A}$ has a right and left adjoint, given on objects by 
\[i^!_S(X)=\rhom(S,X)\lten_B S,\; i^*_S(X)=\rhom(X,S)^*\lten_B S, \]see \cite[Theorem~3.2]{B89} and \cite[Section 2.4]{SR16}.   By the assumption of $S$, we have that \[i^!_S(X)\cong \oplus_{i\in\mathbb Z}\Hom(S,X[i])\otimes_\k S[-i], \quad i^*_S(X)=\oplus_{i\in\mathbb Z} \Hom(X,S[i])^*\otimes_\k S[i].\]


The above functors gives rise to adjoints of the embedding $^{\perp} S\to \db{\A}$  and $S^{\perp}\to \db{\A}$.  The right adjoint $T^*_S\colon \db{\A}\to {}^{\perp}S $ and left adjoint $T_S\colon \db{\A} \to S^{\perp}$, are defined respectively by the following triangles
\[T^*_S(X)\to X\xrightarrow{\alpha_X}  ii^*_S(X) \to T^*_S(X)[1],\]
\[T_S(X)[-1]\to i i^!_S(X)\xrightarrow{\beta_X} X\to T_S(X).\]
The functors $T_S(=j_*j^*)$ and $T_S^*(=j_!j^*)$ are called the \emph{left} and the \emph{right mutation} functors. The left and right mutation functors vanish on $\langle S\rangle$ and induce mutually inverse equivalences \cite{B89},
\[T_S\colon \ps\to \langle S\rangle ^{\perp},\; T^*_S\colon \langle S\rangle^{\perp}\to \ps.\]

The addmissible subcategory $\langle S\rangle$ gives rise to a recollement as follows, see \cite[Proposition~2.8.2]{Rom}.
\begin{lemma}\label{lem:recl}
For an exceptional object $S\in \db{\A}$, there is a recollement
	\begin{equation}\label{eq:recl}  \begin{tikzcd}
	\langle S\rangle
	\arrow[rr, "i_*"{description}] 
	&& \db{\A}
	\arrow[ll, bend left=30, "i^!_S"{description}] 
	\arrow[ll, bend right=30, "i^*_{S}"{description}]
	\arrow[rr, "j^*"{description}]
	& &\lsup{\perp}{\langle S\rangle}
	\arrow[ll, bend left=30,"j_*"{description}] 
	\arrow[ll, bend right=30, "j_!"{description}]
\end{tikzcd},\end{equation}
where $j_!$ is the inclusion functor, $j^*=T_S^*$ and $j_*=T_S|_{\lsup{\perp}{\langle S\rangle}}$.
\end{lemma}

\subsection{Serre functor} Let $\C$ be a Hom-finite $\k$-linear category, where $\k$ is an algebraically closed field. A \emph{Serre functor} is a $\k$-linear autoequivalence $\S_{\C}$ of $\C$ such that for any objects $A,B\in \C$, there exists an isomorphism
\[\eta_{A,B}\colon \Hom(A,B)\to \Hom(B,\S_{\C} A)^*,\]
where $(-)^*=\Hom_\k(-,\k)$ is the standard duality.

Recall that an abelian category $\A$ is called \emph{Ext-finite} if $\dim_{\k}\Ext^i_\A(X,Y)<\infty$ for any $X,Y \in \A$ and all $i\in \mathbb Z$. An Ext-finite abelian category $\mathcal A$ has \emph{Serre duality} if $D^b(\mathcal A)$ admits a Serre functor $\S$. 
A t-structure $(\mathcal U,\mathcal V)$ or an aisle $\U$ is called \emph{closed under the Serre functor} $\S$ if $\S\U\subseteq\U$.  

For a finite-dimensional algebra $\Lambda$ with a finite global dimension, the category $\A=\mod\Lambda$ has Serre duality. The Serre functor $\S$ is given by $\S\cong -\otimes_{\k}^L\Lambda^*\colon \db{\A}\to\db{\A}$. The category $\db{\A}$ has a Serre functor $\S$ if and only if $\db{\A}$ has Auslander-Reiten triangles. For every indecomposable object $X\in\db{\A}$, the Auslander-Reiten triangle is 
$\S[-1]X\to Y\to X\to \S(X)$, see \cite[Proposition~I.2.3]{RV1} and \cite[Proposition~2.8]{C}. 

\begin{example}[{\cite[Example~10.2]{SR16}}]
    Let $\Lambda$ be a finite-dimensional hereditary algebra and $(\T,\F)$ be a torsion pair in $\mod\Lambda$. The aisle $\U_{\mathcal T}$ determined by $\T$ is closed under the Serre functor if and only if for each projective object $P\in\mathcal T$, the corresponding injective object $P\otimes_{\Lambda}\Lambda^*$ is in $\mathcal T$. 
\end{example}

The following result is a special case of  \cite[Proposition 3.7]{BK}.
\begin{lemma}\label{lem:serre}
	Let $\mathcal A$ be an Ext-finite abelian category of finite global dimension and $S\in\db{\A}$ an exceptional object. Let $\S$ be the Serre functor of $\db{\A}$. Then $\ps$ and $\langle S\rangle^{\perp}$ have Serre functors given by 
	\[\S_{\ps}\cong T^*_S\circ\S,\; \S^{-1}_{{\langle S\rangle}^{\perp}}\cong T_S\circ \S^{-1}.\]
\end{lemma}

\section{Effaceable torsion pairs}\label{sec:three}

\subsection{Effaceable torsion pairs }
Let $\A$ be an abelian category with a torsion pair $(\T,\F)$,  and $\B$ be the HRS-tilt of $\A$.  The realization functor $\Phi\colon \db{\B}\to \db{\A}$ is not an equivalence in general. By \cite[Theorem A]{CHZ}, the realization functor $\Phi$ is an equivalence if and only if the torsion pair $(\T,\F)$ is effaceable. In \cite[Section 3]{CHZ}, there are several different characterizations of $(\T,\F)$ being effaceable. Now we give some characterizations of effaceable torsion pairs.

\begin{proposition}\label{prop:ff2}Let $\A$ be an abelian category with a torsion pair $(\T,\F)$,  and $\B$ be the HRS-tilt of $\A$. Assume that $A\in \mathcal{A}$. Then the following statements are equivalent:
	\begin{enumerate}
		\item The torsion pair $(\T,\F)$ is effaceable.
		\item There is an exact triangle $A\rightarrow B_1\rightarrow B_2\rightarrow \Sigma(A)$ in $\db\A$ with $B_1, B_2\in \mathcal{B}$;
		\item For any object $T\in\T$ and $F\in \F$, then any morphism $F\to T[1]$ in $\db\A$ factors as $F\to B\to T[1]$  with $B\in \B$.
		\item For $T\in\T$ and $F\in \F$ and any morphism $F[1]\to T[2]$ in $\db\A$,  there is an epimorphism $C\to F[1]$ in $\B$ such that the composition $C\to F[1]\to T[2]$ is zero.
		\item For $T\in\T$ and $F\in \F$ and any morphism $F[1]\to T[2]$ in $\db\A$,  there is a monomorphism $T\to D$ in $\B$ such that the composition $F[1]\to T[2]\to D[2]$ is zero. 
	\end{enumerate}
\end{proposition}
\begin{proof}
	The equivalence between $(1)$ and $(2)$ is \cite[Proposition 3.2]{CHZ}.
	
``(2)$\Rightarrow $(3)"  Given $T\in \T$ and $F\in \F$, any morphism $f\colon F\to T[1]$ in $\db{\A}$ could be complete to a triangle
$ T\to A\to F\xrightarrow{f} T[1]$ with $A\in \A$.  By assumption, we have another triangle  $A\rightarrow B_1\rightarrow B_2\rightarrow \Sigma(A)$ in $\db\A$ with $B_1,B_2\in \mathcal{B}$. Applying the octahedral axiom, we have the following commutative diagram
\begin{equation}\label{com:oct}
	\begin{tikzcd}
		&B_2[-1] \arrow[r,equal] \arrow[d] & B_2[-1] \arrow[d] &\\
		T \arrow[r]  \arrow[d,equal]& A\arrow[r] \arrow[d]&F\arrow[r, "f"] \arrow[d]&T[1]\arrow[d,equal]\\
		T \arrow[r] & B_1\arrow[r] \arrow[d]&B \arrow[d]\arrow[r]&T[1]\\
		&B_2\arrow[r, equal]&B_2&
	\end{tikzcd}
\end{equation}
Taking the cohomology of the third row and third column, we get an epimorphism
$H^0(B_1)\to H^0(B)$  and a monomorphism $H^{-1}(B)\to H^{-1}(B_2)$ in $\A$. It implies that $B\in \B$. Thus $f\colon F\to T[1]$ factors as $F\to B\to T[1]$ with $B\in \B$.

``(3)$\Rightarrow $(2)"  	 Assume that for any $T\in \T$ and $F\in \F$, any morphism $ F\to T[1]$ in $\db{\A}$ factors as $F\to B\to T[1]$.  Given $A\in \A$, there exists a canonical triangle $T\to A\to F\xrightarrow{f} T[1]$ with $T\in \T$ and $F\in\F$. Then we have the right-most commutative square in the commutative diagram (\ref{com:oct}). By the octahedral axiom, we obtain the commutative diagram (\ref{com:oct}). By $T,B\in \B$, the triangle of the third row implies that $B_1\in \B$.  Consider the third column, we have $B_2\in \B$ since $F[1], B \in \B$. The second column gives a triangle $A\to B_1\to B_2\to A[1]$ with $B_1,B_2\in \B$.

``(3)$\Rightarrow $(4)"   Given $T\in\T$, $F\in \F$ and any morphism $F[1]\to T[2]$ in $\db\A$,  there exists $B\in \B$ such that $F[1]\to T[2]$  factors as $F[1]\to B[1]\to T[2]$. By the octahedral axiom, we have the following commutative diagram 
\begin{equation}\label{com:ft}
	\begin{tikzcd}
		F \arrow[r]  \arrow[d,equal]& B\arrow[r] \arrow[d]&C\arrow[r, "f"] \arrow[d,dashed] &F[1]\arrow[d,equal]\\
		F \arrow[r] & T[1]\arrow[r]&A[1] \arrow[r, "g"]&F[1]
	\end{tikzcd}
\end{equation}
Since $B,F[1]\in\B$, we have that $C\in B$ and the morphism $C\to F[1]$ is an epimorphism in $\B$. Moreover, the composition $C\to F[1]\to T[2]$ is zero.

``(4)$\Rightarrow $(3)"    For any object $T\in\T$ and $F\in \F$, then any morphism $F[1]\to T[2]$ in $\db\A$,  there is an triangle $T[1]\to A[1]\to F[1]\xrightarrow{g[1]} T[2]$ in $\db{\A}$ with $A\in \A$.  If there is an epimorphism $C\to F[1]$ in $\B$ such that the composition $C\to F[1]\to T[2]$ is zero, then the morphism $C\to F[1]$ factors through $ g\colon A[1]\to F[1]$.  The right-most commutative square in (\ref{com:ft}) induces a morphism of triangles in (\ref{com:ft}). The top triangle $B\to C\to F[1]\to B[1]$ implies that  $B\in \B$. The morphism $F[1]\to T[2]$ must factor through $B[1]$.

The proof of equivalence ``(3)$\Leftrightarrow $(5)"  is similar to the proof of equivalnece``(3)$\Leftrightarrow $(4)". 
\end{proof}
\begin{remark}
	The equivalence of $\Phi\colon \db{\B}\to \db\A$ is controlled by the property of  morphisms $F\to T[1]$. The conditions (4) and (5) in  Proposition \ref{prop:ff2} verify the effaceable property of morphisms $F\to T[1]$.
\end{remark}
                                                                              
\subsection{Closedness of Serre functor}
Let $\A$ be an abelian category with torsion pair $(\T,\F)$. The HRS-tilt is $\B$, the heart of the t-structure $(\U_\T,\V_{\T})$.  Our goal is to show that if $\A$ has Serre duality and $(\T,\F)$ is efffaceable, then $\S\U_\T\subseteq \U_{\T}$.

\begin{proposition}\label{prop:epi}
Let $\A$ be an Ext-finite abelian category with Serre duality and   $(\mathcal T,\mathcal F)$ be an effaceable torsion pair of  $\A$. Let $\B$ be HRS-tilt of  $\A$. Given any morphism $B\to X[1]$ with $B\in \B$ and $X\in \U_\T$, there is an epimorphism $C\to B$ in $\B$ such that the composition $C\to B\to X[1]$ is zero.
\end{proposition}
\begin{proof}
	If  $(\T,\F)$ is effaceable and any $A\in \A$,  then there is a triangle $A\to B_1\xrightarrow{f} B_2\to A[1]$ with $B_1, B_2\in B$ . Complete $f$ to a triangle in $\db{\B}$, we have  $Z\to B_1\xrightarrow{f} B_2\to Z[1]$. Aplying  the realization functor  $\Phi\colon \db{\B}\to \db{\A}$ to the triangle, we have that $A\cong \Phi(Z)$. Thus $\A$ is contained in the essential image $\im G$. It follows that realization functor $\Phi\colon \db{\B}\to\db{\A}$ is an equivalence \cite[Proposition 3.3]{CHZ}. 
	
	We claim that for any morphism $\alpha\colon B\to D[n]$ with $B,D\in \B$ and $n\geq 1$, there exists an epimorphims $C\to B$ in $\B$ such that $C\to B\to D[n]$ is zero.  We complete the morphism $\alpha$ to a triangle in $\db{\A}$
	\[B'\xrightarrow{\alpha'}B\xrightarrow{\alpha} D[n]\to B'[1].\]
	Assume that $B'\cong \Phi(E)$ with $E\in \db{\B}$. Applying cohomological fucntor $H^i_{\B}$, we have that $H^i_{\B}(\Phi(E))= 0$ for $i\neq 0,-n+1$. By \cite[Lemma~2.3.(2)]{CHZ}, we have that the standard cohomology in $\db{\B}$  satisfy $H^i(E)=0$ for $i\neq 0,-n+1$. We assume that $E$ is of the form in $\db{\B}$
	\[\cdots\to 0\to E^{-n+1}\to \cdots\to E^0\to 0\to \cdots\]
	Let $\beta\colon E^0\to E$ be the canonical morphism. The composition $\beta'\colon E^0\xrightarrow{\Phi(\beta)} \Phi(E)\to B$  is an epimorphism in $\B$. Indeed, $H_{\B}^0(E^0)\to H^0_{\B}(\Phi (E))$ is epimorphism and $H^0_{\B}(\Phi(E))\to H^0_{\B}(B)$ is an isomorphism. Let $C=E^0$, and the epimorphism $\beta'\colon C\to B$ satisfies $C\to B\to D[n]$ is zero.
	
	Let $X\in\U_{\T}$. Since $\U_\T$ is bounded there exists non-negative integers $m\leq n$, with $n$ minimal and $m$ maximal, such that $H^i_\B(X)=0,i\notin[-n,-m]$. We use induction on $n-m$. If $n-m=0$ then $X\in\B[n]$ and the assertion follows by the above discussions. Suppose $n-m>1$. The choice of $m$ shows that $H^{-m}_\B(X)\neq 0$. So there is a triangle
	\[\tau^\B_{\leq -m-1}X[1]\to X[1]\to H^{-m}_\B(X)[m+1]\to \tau^\B_{\leq -m-1}X[2]\] 
	For any morphism $B\to X[1]$, there exists an epimorphism $C'\to B$ in $\B$ such that $C'\to B\to X[1]\to H^m_\B(X)[m+1]$ is zero. Thus the composition morphism $C'\to X[1]$ factor as $C'\to \tau^\B_{\leq -m-1}X[1]$. By induction argument, there exists an epimorphism $C\to B$ in $\B$ such that $C\to B\to X[1]$ is zero.
\end{proof}

Now we can prove the aisle $\U_{\T}$ is closed under Serre functor, compare to \cite[Proposition 4.12]{SR16}.
\begin{theorem}\label{thm:suffr}
	Let $\A$ be an Ext-finite abelian category with Serre duality. If a torsion pair $(\mathcal T,\mathcal F)$ on $\A$ is effaceable. Then the aisle $\U_\T$ satisfies that $\S\U_\T\subseteq \U_{\T}$.
\end{theorem}
\begin{proof}
The aisle $\U_\T$ is closed under positive shifts and extensions. If there exists $X\in \U_\T$ such that $\S X\notin \U_\T$, then there exists $C\in \B$ such that $\S C\notin \U_\T$.  

Let $m$ be the maximal number such that $H^m_{\B}(\S C)\neq 0$ with $m\geq 1$. Then there is a triangle 
\[\tau^\B_{\leq m-1}(\S C)\to \S C\xrightarrow{h} H^m_{\B}(\S C)[-m]\xrightarrow{g[-m]} \tau^\B_{\leq m-1}(\S C)[1]\]
Since $\tau^\B_{\leq m-1}(\S C)[m+1]\in \U_\T[1]$,  by Proposition \ref{prop:epi}, there exist an epimorphism  $B\to H^m_{\B}(\S C)$ in $\B$ such that $B\to H^m_{\B}(\S C)\xrightarrow{g} \tau^\B_{\leq m-1}(\S C)[m+1]$ is zero. It follows that $B[-m]\to H^m_{\B}(\S C)[-m]$ factors through $h\colon \S C\to H^m_{\B}(\S C)[-m]$. Thus $\Hom(B[-m],\S C)\neq 0$. By Serre duality, we have $\Hom(C,B[-m])\neq 0$. This contradics with $\Hom(C,B[-m])=0$ for any $B,C\in \B$ and $m\geq 1$.
\end{proof}
The above result implies the following about the standard t-structure.
\begin{corollary}
	Let $\A$ be an Ext-finite abelian category with Serre duality. Then the standard t-structure $(\D^{\leq 0},\D^{\geq 0})$ of $\db{\A}$ satisfies that $\S \D^{\leq 0}\subseteq \D^{\leq 0}$.
\end{corollary}

\section{Sufficient for torsion pair being effaceable}\label{sec:sp}
    
For a finite-dimensional algebra $\Lambda$, we denote the category $\mod \Lambda$ by $\A$. We have proved that for an algebra $\Lambda$ of finite global dimension, the torsion pair $(\T,\F)$ of $\A$ is effaceable implies that the aisle $\U_{\T}$ in $\db{\Lambda}$ is closed under the Serre functor. In the following sections, we prove that the converse holds in some special cases. We consider the torsion classes that are finitely generated or without Ext-projective objects. The statement for general cases of torsion classes depends on the factorization of $F[1]\to T[2]$ with $F\in F$ and $T\in \T$. 
\subsection{ Aisle determined by special torsion classes}	
Recall that an object $E\in\db{\A}$ is called \emph{presilting} if $\Hom_{\db{\A}}(E,E[1])=0${, and \emph{silting} if further $\thick E=\db{A}$.} A bounded t-structure $(\U,\V)$ is called \emph{finitely generated} if there exists an object $E$ such that $\U=\{X\in\db{A}|\Hom(E,X[>0])=0\}$. The object $E$ is called a \emph{generator} of $\U$.

Let $\Lambda$ be an algebra of finite global dimension. By \cite[Lemma 4.6]{LVY}, the aisle $\U_T$ associated to a silting object $T$ in $\db{\Lambda}$ is closed under Serre functor if and only if the object $T$ is a tilting object. Moreover, if $\U_T$ corresponds a torsion class $\T$ in $\mod\Lambda$, then the torsion pair $(\T,\F)$ must be effaceable by \cite[Theorem A]{CHZ}. In the following result, we explicitly construct the 5-term exact sequence for each $\Lambda$-module $X$ to show the torsion pair is effaceable. Note that the 5-term exact sequence is not unique for the $\Lambda$-module $X$.
\begin{lemma}\label{lem:enough}
	Let $\Lambda$ be a finite-dimensional algebra of finite global dimension. Assume $\T$ is a finitely generated torsion class in $\mod \Lambda$ and the aisle $\U_\T$ is closed under the Serre functor $\S$. Then the torsion pair $(\T,\F)$ is effaceable.
\end{lemma}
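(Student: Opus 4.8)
The plan is to use the bijection of Proposition~\ref{prop:int} together with the description of finitely generated torsion classes via presilting objects, and then invoke the criterion of \cite{CHZ} in the form that relates completeness of $(\T,\F)$ to the realization functor being an equivalence. Concretely, since $\T$ is finitely generated, let $E$ be a basic Ext-projective generator of $\T$, so that every object of $\T$ is a quotient of an object in $\add E$; by the remark after Proposition~\ref{prop:int}, $E$ (placed in degree $0$) is a presilting object that generates the aisle $\U_\T$, i.e.\ $\U_\T=\bigcup_n\add E[n]\ast\cdots\ast\add E$. The first step is therefore to make precise that the HRS-tilted heart $\B=\F[1]\ast\T$ is the heart of a \emph{finitely generated} bounded t-structure, with presilting generator $E$, and in particular that $E$ is a silting object of $\db\A$ once we know $\U_\T$ is closed under $\S$ (which will force $E$ to generate $\db\A$ as a thick subcategory).

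The heart of the argument is the passage from ``$\U_\T$ closed under $\S$'' to ``$E$ is a tilting object''. Since $\Lambda$ is hereditary, $\Hom_{\db\A}(E,E[i])=0$ for $i\ne 0,1$ automatically, and presilting gives the vanishing for $i=1$; the remaining point is that $\S\U_\T\subseteq\U_\T$ forces $\langle E\rangle=\db\A$. Indeed, by Lemma~\ref{lem:trans} the closure condition says precisely that for every projective $P\in\T$ the injective $\S P=P\otimes_\Lambda\Lambda^*$ also lies in $\T$; since $E$ contains, up to $\add$, all the Ext-projectives of $\T$ and in particular all projective $\Lambda$-modules that happen to lie in $\T$, an induction (stripping off simple objects, or counting non-isomorphic indecomposable summands) shows $E$ has exactly $\mathrm{rk}\,K_0(\Lambda)$ indecomposable summands and no self-extensions, hence is a tilting $\Lambda$-module; equivalently $E$ is a silting object of $\db\A$ generating it. Then $\End_{\db\A}(E)=:\Gamma$ is a finite-dimensional algebra and $R\Hom_{\db\A}(E,-)\colon\db\A\to\db\Gamma$ is a triangle equivalence sending the t-structure $(\U_\T,\V_\T)$ to the standard one; thus $\B\simeq\mod\Gamma$, the realization functor $F\colon\db\B\to\db\A$ is an equivalence, and \cite[Theorem~A]{CHZ} yields that $(\T,\F)$ is complete.

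The main obstacle I expect is the bookkeeping in the induction that produces a \emph{tilting} module from the closure condition: one must show that the Ext-projective generator $E$ of a finitely generated $\T$ with $\S\U_\T\subseteq\U_\T$ has the right number of indecomposable summands. The clean way is to argue that the class of projective $\Lambda$-modules lying in $\T$, together with the injectives forced in by $\S$, generate enough of $\A$; alternatively, one reduces to the known statement that a presilting object $E$ with $\S\langle E\rangle\subseteq\langle E\rangle$ and $\langle E\rangle$ admitting a bounded t-structure with heart a module category must be silting (this is essentially \cite[Corollary~4.13]{SR16} specialized to the hereditary case, but here we want a direct argument). Once $E$ is silting, everything else is formal: the equivalence $R\Hom(E,-)$ transports the t-structure, identifies $\B$ with $\mod\Gamma$, and the completeness of $(\T,\F)$ follows from \cite{CHZ}. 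A secondary, routine point is checking that the five-term sequence~\eqref{eq:seq} with the required $\Ext^3$-vanishing is exactly the obstruction measured by $F$; this is precisely the content of \cite[Theorem~A]{CHZ}, so no new computation is needed beyond citing it.
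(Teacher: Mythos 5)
Your proposal takes a genuinely different route from the paper and, as written, has real gaps. The paper's argument is short and self-contained: since $\T$ is finitely generated the aisle $\U_\T$ is finitely generated, so by Aihara--Iyama \cite[Prop.~2.23]{AI} the pair $(\V_\T,\S\U_\T)$ is a co-t-structure; for $X\in\A$ the approximation triangle $N\to X\to M\to N[1]$ with $N\in\V_\T$ and $M\in\S\U_\T\subseteq\U_\T$ yields, upon taking cohomology, exactly the five-term sequence \eqref{eq:seq} with $H^{-1}(M),H^0(N)\in\F$ and $H^0(M),H^1(N)\in\T$. No tilting object, derived equivalence, or appeal to \cite[Thm.~A]{CHZ} is needed.

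Your route---produce a tilting object, deduce $\B\simeq\mod\Gamma$ with $F$ an equivalence, then cite \cite[Thm.~A]{CHZ}---has three problems. First, the Ext-projective generator $E$ of $\T$ is \emph{not} the presilting generator of $\U_\T$ in general: the correct generator is a two-term silting object of the form $E\oplus P[1]$ for suitable projectives $P$. (Take $\T=0$: then $E=0$ while $\U_\T=D^{\leq-1}$ is generated by $\Lambda[1]$.) Consequently the assertion that $\S\U_\T\subseteq\U_\T$ forces $E$ itself to be a tilting module fails already in that example, even though the torsion pair $(0,\A)$ is trivially complete. Second, the ``induction/bookkeeping'' step that is supposed to produce $\mathrm{rk}\,K_0(\Lambda)$-many summands is not carried out, and you flag it yourself as the main obstacle; it is in fact the entire mathematical content of your approach. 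Third, you invoke heredity of $\Lambda$ to kill $\Hom(E,E[i])$ for $i\neq 0,1$, but the lemma is stated---and the paper's co-t-structure argument works---for any finite-dimensional algebra of finite global dimension. If you want to stay close to your line of thought, the clean repair is to work with the two-term silting object $E\oplus P[1]$ and quote \cite[Cor.~4.13]{SR16}; but that re-derives the general criterion rather than giving the direct construction of the five-term sequence that the paper aims for.
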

\begin{proof}
	Recall that for each t-structure $(\U,\V)$ with $\U$ finitely generated, we have that $(\V,\S\U)$ is a co-t-structure, see \cite[Proposition~2.22]{AI},\cite[Theorem 4.3.2]{Bo}. 
	
	If $\mathcal T$ is finitely generated, then so is $\U_{\T}$. Now we take the the co-t-structure $(\V_{\T},\S\U_{\T})$. For any $X\in\A$, let 
	\[N\to X\to M\to N[1]\]
	be the triangle of $X$ with respect to $(\V_\T,\S\U_\T)$. Taking cohomology with respect to the canonical t-structure, we have the following long exact sequence
	\[0\to H^{-1}(M)\to H^0(N)\to X\to H^0(M)\to H^1(N)\to 0\]
	where the first term $H^{-1}(M)$ lies in $\F$ because $N\in\V_{\T}$ and $H^0(N)\in\F$. Similarly, $H^0(M)\in\T$ implies that $H^1(N)\in\T$. Thus $(\T,\F)$ is effaceable.
\end{proof}

If a torsion class $\T$ is not finitely generated, then we do not have the co-t-structure $(\V_{\T},\S\U_{\T})$. In this case, there is no direct approach to prove $(\T,\F)$ is effaceable under the assumption $\U_{\T}$ is closed under the Serre functor. For the category $\A$ with $\Lambda$ being a hereditary algebra, we could show that $\S\U_{\T}\subseteq \U_{\T}$ implies $(\T,\F)$ is effaceable under the assumption the torsion class $\T$ without Ext-projectives.

\begin{lemma}\label{lem:torp}
	Assume that $\mathcal A$ is a hereditary category and $\T$ is a torsion class without Ext-projective objects, then $(\T,\F)$ is effaceable.
\end{lemma}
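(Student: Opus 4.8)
The plan is to run the torsion pair against a projective object and to show that its torsion subobject is automatically Ext-projective in $\T$, hence zero.

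Fix a projective object $P\in\A$ and let
\[0\to tP\to P\to P/tP\to 0\]
be the canonical short exact sequence attached to the torsion pair $(\T,\F)$, so that $tP\in\T$ and $P/tP\in\F$. It is enough to prove that $tP=0$, since then $P\cong P/tP\in\F$.

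The first step is to observe that $tP$, being a subobject of the projective object $P$ in the hereditary category $\A$, is itself projective. Indeed, applying $\Hom_\A(-,Y)$ to the displayed sequence yields an exact segment
\[\Ext^1_\A(P,Y)\longrightarrow\Ext^1_\A(tP,Y)\longrightarrow\Ext^2_\A(P/tP,Y);\]
the outer terms vanish -- the first because $P$ is projective, the second because $\A$ is hereditary -- so $\Ext^1_\A(tP,Y)=0$ for every $Y\in\A$.

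The second step is to upgrade this to Ext-projectivity of $tP$ inside $\T$. Since $\T$ is closed under extensions, a short exact sequence of objects of $\T$ is the same thing inside $\T$ as inside $\A$, so $\Ext^1_\T(tP,T)=\Ext^1_\A(tP,T)=0$ for all $T\in\T$; a dimension-shift argument in the exact category $\T$ then propagates this to $\Ext^i_\T(tP,T)=0$ for all $i\ge 1$ and all $T\in\T$. Thus $tP$ is an Ext-projective object of $\T$, and since $\T$ contains no nonzero Ext-projective object by hypothesis, $tP=0$. Therefore $P\in\F$.

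The whole argument is short once the two ingredients -- subobjects of projectives are projective in a hereditary category, and $\Ext$ computed in $\T$ coincides with $\Ext$ computed in $\A$ on objects of $\T$ -- are in place; the only point meriting a word of care is the second of these, which is precisely where closure of $\T$ under extensions is used. No twist functor, t-structure, or Serre-duality input is needed for this lemma; those enter only later, when the general torsion class is reduced to the Ext-projective-free situation.
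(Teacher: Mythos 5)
Your proof is correct and follows essentially the same strategy as the paper's: take the canonical sequence $0\to tP\to P\to P/tP\to 0$, observe that the torsion subobject $tP$ is projective in $\A$ (a subobject of a projective in a hereditary category), deduce it is Ext-projective in $\T$, and conclude $tP=0$ from the hypothesis. You spell out the projectivity of $tP$ via the long exact $\Ext$-sequence and the passage from $\Ext^1_\A$ to $\Ext^1_\T$ more explicitly than the paper, which is a welcome clarification; the paper's version is terser (and in fact appears to contain a slip, asserting projectivity of $P_\F$ rather than of $P_\T$). One small caveat: the ``dimension-shift in the exact category $\T$'' you invoke to kill higher $\Ext^i_\T$ is unnecessary and slightly delicate to justify as stated, since $\T$ need not have enough projectives; it is cleaner to note directly that any Yoneda $i$-extension in $\T$ ending in $tP$ has its last conflation split by projectivity of $tP$ in $\A$, or simply that in the hereditary setting Ext-projectivity reduces to the vanishing of $\Ext^1$.
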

\begin{proof}
	For any projective object $P\in \A$, there is a short exact sequence
	\[0\to P_{\T}\to P\to P_{\F}\to 0\]
	with $P_{\T}\in \T$, and $P_{\F}\in \F$. Since $\A$ is hereditary, we have that $P_{\T}$ is either zero or projective. Since $\T$ does not contain any nonzero projective objects, we have that $P_{\T}=0$ and hence $P\in\F$.	 Then each object $A\in \A$ has a projective resolution $0\to P_1\to P_2\to A\to 0$ with $P_1,P_2\in \F$.
\end{proof}

\subsection{The factorization}
In this section, we assume that category $\mathcal A$ is an abelian category with Serre duality $\S$. Let $(\T,\F)$ be a torsion pair of $\mathcal A$ such that the corresponding aisle $\U_{\T}$ is closed under the Serre functor, i.e. $\S\U_{\T}\subset \U_{\T}$. The HRS-tilt of $\A$ with respect to $(\T,\F)$ is denoted by $\B$. 
Let $E\in \U_{\T}$ be an indecomposable Ext-projective object.
The following result shows that the indecomposable Ext-projective object $E$ is projective in $\B$ and has a simple top $S$. 

\begin{proposition}[{\cite[Proposition~6.4]{SR16}}]\label{prop:SE}
	Let $\A$ be an abelian category with Serre duality and $(\U,\V)$ the bounded t-structure in $\db\A$ closed under the Serre functor. Let  $E$ be an Ext-projective of $\U$, then  we have that
	\begin{enumerate}
		\item $E$ is projective in $\B$ and $\S E$ is injective in $\B$;
		\item  The object $E$ has a simple top $S$ in $\B$;
		\item If $\A$ is hereditary, then                         $\Hom_{\db{\A}}(S,S[n])=0$ for $n\neq 0$, and $\End S=\k$.
	\end{enumerate}    
\end{proposition}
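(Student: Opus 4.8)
The plan is to read off all four statements from the definition of $\B=\F[1]\ast\T$ as the heart of the HRS t-structure $(\U,\V)=(\U_\T,\V_\T)$, using only: the characterization of Ext-projectivity of $E$ in $\U$, the closure $\S\U\subseteq\U$, Serre duality, and the Auslander--Reiten triangle $\tau E\to M\to E\to \S E$ (equivalently $\S E[-1]\to M\to E\to \S E$). Write $\Hom=\Hom_{\db\A}$ throughout and recall that in the heart $\B$ one has $\Ext^1_\B(X,Y)\cong\Hom(X,Y[1])$ and, more generally, for $X,Y\in\B$ a map in $\Hom(X,Y[n])$ with $n\geq 2$ need not be an extension in $\B$ but still controls projectivity/injectivity once combined with the t-structure. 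First I would record the bookkeeping: $\tau=\S[-1]$, so since $E\in\U$ (it is an object of $\B\subseteq\U$) and $\S\U\subseteq\U$, both $\S E$ and $\tau E=\S E[-1]$ lie in $\U$; I also want to know $\S E\in\V[1]$, i.e.\ $\S E\in\B$, which will follow from the Ext-projectivity of $E$ dualized via Serre duality (see below).

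For (1): $E$ Ext-projective in $\U$ means $\Hom(E,U[i])=0$ for all $i>0$ and all $U\in\U$; in particular $\Hom(E,B[i])=0$ for $B\in\B\subseteq\U$ and $i\geq 1$, which is exactly $\Ext^i_\B(E,B)=0$, so $E$ is projective in $\B$. Dually, I claim $\S E$ is injective in $\B$: by Serre duality $\Hom(B,\S E[i])\cong\Hom(E[i],B)^*=\Hom(E,B[-i])^*$, and for $i\geq 1$ we have $-i\leq -1$, so $\Hom(E,B[-i])$ — which computes a negative Ext group in the heart $\B$ — vanishes; hence $\Ext^i_\B(B,\S E)=0$ for $i\geq 1$ and $\S E$ is injective in $\B$, once we know $\S E\in\B$. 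That last point: $\S E\in\U$ by closure; and $\S E\in\V[1]$ because $\Hom(\V[1],\S E)=0$ would follow — more cleanly, $\Hom(\S E,\V[1])$... here one argues $\S E\in{}^{\perp}(\V)$ via $\Hom(\S E,V)\cong\Hom(V[-1],E)^*=0$ for $V\in\V$ since $E\in\U$ and $\U[1]\subseteq\U$, $\Hom(\U,\V)=0$ gives $\Hom(V[-1],E)=0$ wait — $V[-1]\in\V[-1]\subseteq\V$ so $\Hom(E,V[-1])=0$, dualize. Combined with $\S E\in\U$ this puts $\S E$ in the heart $\B=\U\cap\V[1]$. I would assemble these adjunctions carefully since sign/shift conventions are the only real trap.

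For (2): $E$ is projective in $\B$ and nonzero; its top $\mathrm{top}_\B E=E/\mathrm{rad}_\B E$ is semisimple in $\B$, so I must show it is simple, i.e.\ $\End_\B(E)$ is local (equivalently, $E$ is indecomposable in $\B$). Here I invoke the hypothesis that $E$ is \emph{indecomposable} Ext-projective in $\U$; since $\B$ is a full subcategory of $\db\A$ closed under direct summands (it is the heart of a t-structure, hence idempotent-complete), indecomposability of $E$ in $\db\A$ gives indecomposability in $\B$, so $E$ has local endomorphism ring and $S:=\mathrm{top}_\B E$ is simple. Call $\pi\colon E\twoheadrightarrow S$ in $\B$. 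For (3): apply the octahedral/snake argument to the AR-triangle. Truncate $\S E$: since $\S E\in\B$ and $M\in\db\A$, take the triangle $M_\U\to M\to M_\V\to M_\U[1]$; the long exact cohomology sequence applied to $\S E[-1]\to M\to E\xrightarrow{\pi'} \S E$ together with projectivity of $E$ (the map $E\to\S E$ must factor appropriately) yields, after truncating, a short exact sequence $0\to S\to \S E\to M_\V[1]\to 0$ in $\B$ — concretely, the cone of $\pi\colon E\to S$ shifted, compared with the cone $\S E$ of $M\to E$, gives a map $S\to\S E$ whose cokernel in $\B$ is $M_\V[1]$; one checks $M_\V[1]\in\B$ because $M\in\U$ (as $M$ sits between $E,\tau E\in\U$ in a triangle and $\U$ is extension-closed) so $M_\V[1]\in\V[1]$ and also in $\U$ after the shift bookkeeping. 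For (4): from the short exact sequence in (3), $S$ is a subobject of the injective object $\S E$ in $\B$, and also a quotient of the projective $E$; then $\Hom(S,S[n])$ for $n\geq 1$ embeds into $\Ext^n_\B(S,\S E)=0$ (injectivity) via the projective resolution step, while $\Hom(S,S[-n])\cong\Hom(S,S[n])^*$ ... use Serre duality on $\langle\cdot\rangle$, or rather: $n<0$ is handled because $S\in\B$ is a quotient of $E$, so $\Hom(S,X[n])\hookrightarrow$ ... the cleanest route is $\Hom(S,S[n])\cong\Hom(S,\S S[n])^*[\text{shift}]$, but since we only need the hereditary-relevant vanishing, note $S\in\B\subseteq\U\cap\V[1]$ forces $\Hom(S,S[n])=0$ for $n<0$ (as $S\in\V[1]$, $S[n]\in\V[1+n]\subseteq\V$ for $n\leq -1$ wait $n\leq-1\Rightarrow 1+n\leq 0$, and $\Hom(\U,\V)=0$ with $S\in\U$ — need $S[n]\in\V$, true since $\V[-1]\subseteq\V$) and for $n\geq 1$ by the injectivity argument above. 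Finally $\End_\B S=\k$: since $S$ is both simple in $\B$ and $\B$ is $\k$-linear Hom-finite with $\k$ algebraically closed, $\End_\B S$ is a finite-dimensional division algebra over $\k$, hence $\k$.

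\textbf{Main obstacle.} The genuinely delicate step is (3): producing the short exact sequence $0\to S\to\S E\to M_\V[1]\to 0$ in $\B$ and identifying the third term with the coaisle-truncation of $M$ shifted. This requires carefully comparing two triangles (the AR-triangle of $E$ and the defining triangle $\mathrm{rad}_\B E\to E\to S$ of the top) via an octahedron, and then checking membership in $\B$ of every term using $M\in\U$, which itself needs the observation that $M$ lies in the extension-closed aisle $\U$. The shift/sign conventions in Serre duality (whether $\S=\tau[1]$ enters as $[1]$ or $[-1]$ in each $\Hom$-isomorphism) are the only place a routine slip could derail the argument, so I would state the convention once at the top and apply it mechanically.
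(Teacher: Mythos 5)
The paper does not give its own proof of Proposition~\ref{prop:SE}; it is quoted verbatim from \cite[Proposition~6.4]{SR16}. So there is nothing to compare against, and I can only assess your sketch on its own terms. Your overall strategy (pure t-structure plus Serre-duality bookkeeping) is the right one, and parts (1), (2) and (4) are close to correct, but there are concrete errors, most seriously in the step you yourself flag as ``genuinely delicate.''

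The cleanest error is your claim that $\tau E\in\U$, and hence $M\in\U$, in the justification for $M_\V[1]\in\B$. Since $\S E\in\B=\U\cap\V[1]$, we have $\tau E=\S E[-1]\in\U[-1]\cap\V$. Because $\U[1]\subseteq\U$, we have $\U\subsetneq\U[-1]$, so $\tau E\in\U[-1]$ does \emph{not} put $\tau E$ in $\U$; in fact if $\tau E$ were in $\U$ it would lie in $\U\cap\V=0$, forcing $E=0$. Consequently $M\in\U[-1]$, not $\U$. The conclusion $M_\V[1]\in\B$ is still true (from $M\in\U[-1]\cap\V[1]$ one gets $M_\V\in\U[-1]\cap\V$, hence $M_\V[1]\in\U\cap\V[1]$), but your stated reason is wrong. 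More broadly, (3) is only described and never carried out: you write ``after truncating, a short exact sequence $\dots$ gives a map $S\to\S E$ whose cokernel in $\B$ is $M_\V[1]$,'' but you do not exhibit the map $S\to\S E$, do not verify it is a monomorphism in $\B$, and do not identify the cokernel. This is precisely the content of the statement, so (3) remains unproved.

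Two smaller points. In (1), the argument that $\S E\in\V[1]$ goes through several false starts, and the version you settle on ($\Hom(E,V[-1])=0$ for $V\in\V$, then dualise to $\Hom(V[-1],\S E)=0$) only shows $\S E[1]\in{}^{\perp}\V=\U$, which you already knew; it does not show $\S E\in\V[1]$. The correct verification is direct: for $U\in\U$, $\Hom(U[1],\S E)\cong\Hom(E,U[1])^*=0$ by Ext-projectivity and $U[1]\in\U$, so $\S E\in\U[1]^{\perp}=\V[1]$. In (2), ``local endomorphism ring $\Rightarrow$ simple top'' is not automatic for an arbitrary abelian category; one needs $\B$ to be Hom-finite and of finite length (which holds here, since $\B=\F[1]\ast\T$ inside $\db{\mod\Lambda}$), and that should be said. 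Finally, in (4) your justification of $\Hom(S,S[n])=0$ for $n\ge 1$ via an embedding into $\Ext^n_\B(S,\S E)$ conflates $\Ext^n_\B$ with $\Hom_{\db\A}(-,-[n])$, which agree only for $n\le 1$; in the hereditary case one gets the vanishing for $|n|\ge 2$ for free from $\db\A=\bigcup_k\A[k]$, but that is a different argument than the one you wrote and does not cover the generality of the statement.
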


We will consider an indecomposable Ext-projective object $E$ in $\U_{\T}$ and its simple top $S$ in $\B$. For objects $A,B\in \B$, we consider when a morphism $A\to B[2]$ in $\db{\A}$ could be factored as $A\to B'[1]\to B[2]$ with $B'\in \B$.
The following results are essentially in \cite{SR16}. For the reader's convenience, we give proofs and explicit constructions here.
\begin{lemma} \label{lem:zero}
	Given $B\in \B$ and $E$ a indecomposable projective object in $\B$ with simple top $S$, Let $K_B$ be the kernel of $B\to \Hom(B,\S E)^*\otimes\S E$ in $\B$, Then $\Hom(S,K_B)=0$.
\end{lemma}
\begin{proof}
   we claim that $\Hom(K_B,\S E)=0$. Indeed, each morphism $K_B\to \S E$ could be lifted to  $B\to \S E$, thus the composition $K_B\to B\to  \S E$ is zero.
   
	If $S\to K$ is non-zero, then the morphism is a monomorphism. By the injectivity of $\S E$, there is a non-zero morphism $K_B\to \S E$. It is impossible.
\end{proof}

\begin{lemma}\label{lem:conM}
	Given any morphism $A\to B[2]$ with $A,B\in \B$, there exists an epimorphism $M\to A$ in $\B$ and a morphism $M\to K_B[2]$ satisfy the following property:
	  If the morphism $M\to K_B[2]$ has a factorization  $ M\to B''[1]\to K_B[2] $ in $\B$ then the morphism $A\to B[2]$ has a factorzation $A\to B'[1]\to B[2]$ in $\B$.
\end{lemma}
\begin{proof}
Let $D$ be the mapping cone of the evaluation morphism  $B\to\Hom(B,\S E)^*\otimes \S E$. Let $K_B$ and $C_B$ be the kernel and cokernel of $B\to\Hom(B,\S E)^*\otimes \S E$ in $\B$ respectively. We have two triangles in $\db{\A}$
\[B\to\Hom(B,\S E)^*\otimes \S E\to D\to B[1],\quad K_B[1]\to D\to C_B\to K_B[2] \]
Since the map $A\xrightarrow{f} B[2]\to \Hom(B,\S E)^*\otimes \S E[2]$ is zero by Serre duality, we have that $A\to B[2]$ factor as $A\xrightarrow{f'} D[1]\to B[2]$. By Octahedron axiom, the composition $A\to D[1]\to C_B[1]$  gives the following morphism of triangles

\begin{equation}\label{com:conM}
	\begin{tikzcd}
		C_B \arrow[r]  \arrow[d,equal]& M\arrow[r] \arrow[d]&A\arrow[r] \arrow[d,"f'"]&C_B[1]\arrow[d,equal]\\
		C_B \arrow[r] & K_B[2]\arrow[r]&D[1] \arrow[r]&C_B[1]
	\end{tikzcd}
\end{equation}
Since $C_B, A\in \B$, the top triangle implies that $M\in \B$. 
If there exists an epimorphism $C\to M$ in $\B$ such that the composition $C\to M\to K_B$ is zero, then the middle square implies that $C\to M\to A$ is an epimorphism such that $C\to M\to A\to B[2]$ is zero.
\end{proof}

Let $K_M$ and $C_M$ be the kernel and cokernel of the morphism  $\Hom(E,M)\otimes E\to M$ in $\B$, respectively. The mapping cone of $\Hom(E,M)\otimes E\to M$ is denoted by $\tilde{D}$. Note that $H^0_{\B}(\tilde D)\cong C_M$ and $H^{-1}_{\B}(\tilde D)\cong K_M$. We have two triangles in $\db{\A}$
\[\tilde{D}[-1]\to \Hom(E,M)\otimes E\to M\to \tilde{D},\quad K_M[1]\to \tilde{D}\to C_M\to K_M[2]\]

Since $E$ is projective in $\B$, we have that $\Hom(E,K_B[2])=0$. It follows that the map $M\to K_B[2]$ factors as $M\to \tilde{D}\to K_B[2]$. The composition $K_M[1]\to \tilde{D}\to K_B[2]$ gives the following morphism of triangles
\begin{equation}\label{com:km}
	\begin{tikzcd}
	K_M[1] \arrow[r]  \arrow[d,equal]&\tilde{D}\arrow[r] \arrow[d]&C_M\arrow[r] \arrow[d,"f'"]&K_M[2]\arrow[d,equal]\\
		K_M[1] \arrow[r] & K_B[2]\arrow[r]&N[2] \arrow[r]&K_M[2]
	\end{tikzcd}
\end{equation}
The bottom triangle shows that $N\in \B$ and there exists a monomorphism $K_B\to N$ in $\B$.
 
\begin{lemma}\label{lem:hom}Keep the notations as above. Assume that $\A$ is hereditary then we have $\Hom(C_M,S)=0$ and $\Hom(S,N)=0$.
\end{lemma}
\begin{proof}
	Note that $\Hom(E,C_M)=0$.  Indeed each morphism $E\to C_M$ factors as $E\to M\to C_M$ since $E$ is projective in $\B$. By the construction of $C_M$, we have that $E\to C_M$ is zero. If there exists a non-zero morphism $C_M\to S$, then the canonical morphism $E\to S$ could be lifted to a non-zero morphism $E\to C_M$. Thus $\Hom(C_M,S)=0$.
	
	We show that $\Hom(S,N)=0$. By Lemma \ref{lem:zero}, $\Hom(S,K_B)=0$, it sufficient to show that $\Hom(S,K_M)=0$.  We could assume that $\Hom(E,M)\neq 0$.  If $\Hom(S,K_M)\neq 0$, then there is a nonzero composition $ S\to K_M\to \Hom(E,M)\otimes E $ and hence a non-zero morphism $E\to S\to E$. Since $E$ is indecomposable projective, it follows that $S\cong E$. This implies that the composition $S\to K_M\to  \Hom(E,M)\otimes E$ is a split monomorphism such that the composition $S\to  \Hom(E,M)\otimes E \to M$ is zero, which contradicts the universal property of the evaluation map $\Hom(E,M)\otimes E \to M$.
  	\end{proof}

\begin{proposition}\label{prop:fac}Keep the notations as above. Assume that $\A$ is hereditary. 
If the morphism $f'\colon C_M\to N[2]$ factors as $C_M\to B'[1]\to N[2]$ with $B'\in \B$, then the map $M\to K_B[2]$ factors as $M\to B''[1]\to K_B[2]$  with $B''\in \B$.
\end{proposition}
\begin{proof}
	If the morphism $f'\colon C_M\to N[2]$ factors as $C_M\to B'[1]\to N[2]$, then there is a triangle $B'[1]\to N[2]\xrightarrow{g[2]} C'[2]\to B'[2]$, with $C'\in \B$. The morphism $g\colon N\to C'$ is an monomorphism in $\B$. Moreover, the composition $C_M\to N[2]\to C'[2]$ is zero.
	
	By the construction of $\tilde{D}$ in commuative diagram (\ref{com:km}), we know the morphism $M\to K_B[2]$ factors through $ \tilde{D}\to K_B[2]$. Moreover, the composition $\tilde{D}\to K_B[2]\to N[2]\to C'[2]$ is zero. We take the cokernel $B''$of the composition of  monomorphisms $K_B\to N\xrightarrow{g} C'$ in $\B$. It follows that the morphism $\tilde{D}\to K_B[2]$ factor through $B''[1]\to K_B[2]$.
\end{proof}

\section{The Reduction via exceptional objects}\label{sec:five}
Throughout this section, we assume that category $\mathcal A$ is a hereditary abelian category with Serre duality $\S$. Let $(\T,\F)$ be a torsion pair of $\mathcal A$ such that the corresponding aisle $\U_{\T}$ is closed under the Serre functor, i.e. $\S\U_{\T}\subset \U_{\T}$. The HRS-tilt of $\A$ with respect to $(\T,\F)$ is denoted by $\B$.

In Section \ref{sec:sp}, we prove that either $\T$ is finitely generated or does not have Ext-projective objects, then $(\T,\F)$ is effaceable. In this section, we prove this fact for any torsion pair $\T$ such that $\S\U_{\T}\subset \U_{\T}$, $(\T,\F)$ is effaceable. We will prove the statement for any torsion pair $(\T,\F)$ such that $\S\U_{\T}\subset \U_{\T}$. 
We will show how the torsion pair and HRS-tilt interwine with the recollement given by some exceptional objects in $\db{\A}$.

\subsection{The aisle on \texorpdfstring{$\ps$}{}} Let $\A$ be an abelian category with Serre duality.
We consider an indecomposable Ext-projective object $E$ in an aisle $\U_{\T}$ of $\db\A$. 
Given a triangulated category $\D$ and a full additive category $\C$, an object $E$ in $\C$ is called Ext-projective if $\Hom_{\D}(E,C[i])=0$ for all $i>0$. 
By \cite[Theorem 1.4.10]{BBD}, t-structures could be glued with respect to a recollement. Given a recollement (\ref{eq:rec}) and t-structures $(\D_1^{\leq 0},\D_1^{\geq 0})$ and $(\D_2^{\leq 0},\D_2^{\geq 0})$ of $\C_1$ and $\C_2$, then there exists a t-structure $(\tilde{\D}^{\leq 0},\tilde{\D}^{\geq 0})$ on $\C$ such that functors $i_*, j^*$ are t-exact. The t-structure $(\tilde{\D}^{\leq 0},\tilde{\D}^{\geq 0})$  is defined as
\[\tilde{\D}^{\leq 0}=\{X\in \C\mid j^*(X)\in \D_2^{\leq 0},\; i^*(X)\in \D_1^{\leq 0}\};\]
\[\tilde{\D}^{\geq 1}=\{X\in \C\mid j^*(X)\in \D_2^{\geq 1},\; i^!(X)\in \D_1^{\geq 1}\}.\]
We call the t-structure $(\tilde{\D}^{\leq 0},\tilde{\D}^{\geq 0})$ on $\C$ the \emph{glued t-structure} from the t-structures on $\C_1$ and $\C_2$. 
\begin{proposition}\label{prop:glue-t}\cite[Proposition 1.4.12]{BBD}
	Let $(\tilde{\D}^{\leq 0},\tilde{\D}^{\geq 0})$ be a t-structure on $\C$, the following conditions are equivalent:
	\begin{enumerate}
		\item $j_!j^*(\tilde{\D}^{\leq 0})\subseteq \tilde{\D}^{\leq 0}$.
		\item $j_*j^*(\tilde{\D}^{\geq 0})\subseteq \tilde{\D}^{\geq 0}$.
		\item the t-structure of $\C$ is a glued t-structure from $\C_1$ and $\C_2$.
	\end{enumerate}
\end{proposition}
In this case, the t-structures on $\C_1$ and $\C_2$ are given by $(i^*\tilde{\D}^{\leq 0}, i^!\tilde{\D}^{\geq 0})$ and $(j^*\tilde{\D}^{\leq 0}, j^*\tilde{\D}^{\geq 0})$ respectively. 
Let $E\in \U_T$ be an indecomposable Ext-projective object which is projective in $\B$ and has a simple top $S$. Since $S$ is an exceptional object, it follows that there is a recollement (\ref{eq:recl}) by Lemma \ref{eq:recl}.
The following result is implicitly contained in \cite[Lemma 9.1, Proposition 9.2]{SR16}. We give a direct proof here for the reader's convenience.
\begin{lemma}\label{lem:glue-t}
	For $S$ being the simple top as above,  t-structure $(\U_\T,\V_\T)$ of $\db{\A}$ is glued  with respect to the recollement:
	\begin{equation}\label{eq:recf}
\begin{tikzcd}
		\langle S\rangle
		\arrow[rr, "i_*"{description}] 
		&& \db{\A}
		\arrow[ll,  bend left=30, "i^!_S"{description}] 
		\arrow[ll, bend right=30, "i^*_{S}"{description}]
		\arrow[rr, "j^*"{description}]
		& &\lsup{\perp}{\langle S\rangle}
		\arrow[ll, bend left=30, "j_*"{description}] 
		\arrow[ll, bend right=30, "j_!"{description}]
 	\end{tikzcd}
	\end{equation}  
	where $j^!$ is the inclusion functor, $j^*=T^*_S$ and $j_*=T_S|_{\lsup{\perp}{\langle S\rangle}}$. In this case, $T_S^*(\U_\T)=\U_\T\cap \lsup{\perp}{\langle S\rangle}$ is an aisle of $\lsup{\perp}{\langle S\rangle}$.
\end{lemma} 
\begin{proof}
	We know that $j^*=T^*_S$ and the following triangle for $X\in \U_{\T}$
	\[T^*_S(X)\to X\xrightarrow{\alpha_X} \rhom(X,S)^*\lten_\k S \to T^*_S(X)[1].\]
	By the fact that $\rhom(X,S)^*\lten_\k S=\oplus_{i\in\mathbb Z} \Hom(X,S[i])^*\otimes_\k S[i]$ and $S\in \U_\T\cap \V_\T$, we have $\Hom(X,S[i])=0$ for $i<0$. Thus 
	\[\rhom(X,S)^*\lten_\k S=\oplus_{i\geq 0} \Hom(X,S[i])^*\otimes _\k S[i]\in \U_\T.\]
	We still need to show that $T^*_S(X)\in \U_\T$. This equivalent to $H_\B^i(T^*_S(X))=0$ for $i>0$. Applying cohomological functor $H_\B$ to the above triangle, we have that $H^i_\B(T^*_S(X))=0$ for $i>1$. On the other hand, we have the following exact sequence 
    \[H^0_\B(X)\to H^0_\B( \rhom(X,S)^*\lten_\k S)\to H^1_\B(T^*_S(X))\to 0\]
    Since $S$ is simple in $\B$ and $H^0_\B(X)\to H^0_\B( \rhom(X,S)^*\lten_\k S)$ is epimorphism, we have that $H^1_\B(T^*_S(X))=0$. It follows that $T^*_S(\U_\T)\subseteq \U_\T\cap \ps$.
	
    Since $T^*_S|_{\ps}$ is identity, thus the aisle $\U_\T$ of $\db{\A}$ has image $T_S^*(\U_\T)=\U_\T\cap \lsup{\perp}{\langle S\rangle}$.  Thus, we have that 
	\[j_!j^*(\U_\T)=\U_\T\cap \lsup{\perp}{\langle S\rangle} \subseteq \U_\T.\]
	By Proposition \ref{prop:glue-t},  t-structure $(\U_\T,\V_\T)$ of $\db{\A}$ is glued from with respect to the recollement (\ref{eq:recf}).  Moreover, $T_S^*(\U_\T)=\U_\T\cap \lsup{\perp}{\langle S\rangle}$ is an aisle of $\ps$.
\end{proof}

\begin{remark}
	Note that $(j^*(\U_\T),j^*(\V_\T))$ is a t-structure of  $\lsup{\perp}{\langle S\rangle}$,  and  $j^*(\U_\T)=\U_\T\cap \lsup{\perp}{\langle S\rangle}$, but $j^*(\V_\T)$ does  not equal $\V_\T\cap \lsup{\perp}{\langle S\rangle}$ in general.
\end{remark}


By Lemma \ref{lem:serre}, the perpendicular category $\lsup{\perp}{\langle S\rangle}$ has a Serre functor $T^*_S\circ \S$. We prove the following statement about the Serre functor.
\begin{proposition}
	Let $S$ and $(\U_\T,\V_\T)$ as above, then $j^*(\U_\T)$ is closed under Serre functor $T_S^*\circ \S$ of $\lsup{\perp}{\langle S\rangle}$ if $\U_\T$ is closed under Serre functor $\S$.
\end{proposition}
\begin{proof}
Assume that $\S\U_\T\subseteq \U_{\T}$, we check $T^*_S\circ\S (\U_\T\cap \lsup{\perp}{\langle S\rangle}))\subseteq \U_{\T}\cap \lsup{\perp}{\langle S\rangle}$.
Indeed, $$T^*_S\circ\S (\U_\T\cap \lsup{\perp}{\langle S\rangle})\subseteq T^*_S(\U_\T)\cap T^*_S( \lsup{\perp}{\langle S\rangle})\subseteq \U_\T\cap \lsup{\perp}{\langle S\rangle}.$$

\end{proof}

\subsection{HRS-tilts and recollement}
Now we assume that the category $\A$ is a hereditary abelian category, and it follows that each object $X\cong\oplus_{i\in\mathbb Z}H^i(X)[-i]$. There is a torsion pair  $(\T,\F)$ of $\A$ and HRS-tilt of $\A$ is $\B$.

 For the simple object $S\in \B$,  either $S\in \A$ or $S[-1]\in \A$. By Proposition \ref{prop:SE}, the object $S\in \db{\A}$ is exceptional, thus the category $\ps$ is an admissible subcategory of $\db{\A}$. We define a subcategory $\W$ of $\A$ is the intersection $\ps\cap \A$. There exists an object $R$ of  $\A$ such that 
\[\W=\{X\in\A|\Hom(X,R)=0,\Ext^1_\A(X,R)=0\},\]
where $R=S$ or $R=S[-1]$.
By \cite[Proposition 1.1]{GL}, $\W$ is an exact subcategory of $\A$. Moreover, the category $\W$ is equivalent to a module category of a finite-dimensional hereditary algebra with one fewer distinct simple object of  $\A$  \cite[Proposition 3]{HRS2}.

\begin{lemma}\label{lem:res}
	The restriction of standard t-structure $(\D^{\leq 0},\D^{\geq 0})$  of $\db{\A}$ on $\ps$ is a bounded t-srtucture with heart $\W$.
\end{lemma}
\begin{proof}
We only need to check each object $X\in \ps$, there is a finite integers $ k_1 > k_2 > \cdots > k_n$ such that  $0=X_n\to X_{n-1}\to \cdots\to X_0=X$ with each mapping cone of $X_i\to X_{i-1}$ lying in $\W[k_i]$ for $1\leq i\leq n$. For any object $X\in \ps$, $X\cong\oplus_{i=1}^{n}H^{k_i}(X)[-k_i]$. It follows that $\Hom(H^{k_i}(X),S[m])=0$ for $1\leq i\leq n
$ and $m\in \mathbb Z$. This implies that $H^{k_i}(X)\in \W$ for any $1\leq i\leq n$.
\end{proof}

\begin{proposition}\label{prop:real}
The embedding $\W\to \ps$ could be lifted to an equivalent t-exact functor $\db{\W}\to \ps$ such that the restriction on $\W$ is identity. 
\end{proposition}
\begin{proof}The category $\ps$ is an algebraic triangulated category \cite[Lemma 7.5]{Kr1}. Thus, by Lemma~\ref{lem:res}, there exists a realization functor $\Phi\colon \db{\W}\to \ps$ \cite[3.2]{KV}. We need to prove this functor $\Phi$ is an equivalence. 

For any object $X,Y\in \W$, we have that $\Hom_{\db{\W}}(X,Y[i])=0=\Hom_{\db{\A}}(X,Y[i])$ for $i\neq 0,1$. Thus, the realization functor $\Phi$ is fully faithful by \cite[Remark~3.1.17]{BBD}. Since $\W$ is bounded heart in $\ps$, $\Phi$ is dense. So the assertion follows. 
\end{proof}

Combining the recollement (\ref{eq:recf}) and Proposition \ref{prop:real}, we have a recollement (\ref{eq:recm})  which is obtained from the recollement (\ref{eq:recf}) by composing the quasi-inverse of the equivalence $\db{\W}\to \ps$.
	\begin{equation}\label{eq:recm}
	\begin{tikzcd}
		\db{\k}
		\arrow[rr, "i_*"{description}] 
		&& \db{\A}
		\arrow[ll, bend left=30, "i^!_S"{description}] 
		\arrow[ll, bend right=30, "i^*_{S}"{description}]
		\arrow[rr, "j^*"{description}]
		& &\db{\W}
		\arrow[ll, bend left=30,  "j_*"{description}] 
		\arrow[ll, bend right=30, "j_!"{description}]
	\end{tikzcd}
\end{equation} 
\begin{remark}
	In general, $j^*(\A)\ncong \W$. The reason is that the standard t-structure of $\db{\A}$ is not the glued t-structure of the standard t-structure of $\db{\W}$.
\end{remark}

\begin{lemma}\label{lem:tor}keep the notation as above,
given a torsion pair $(\T,\F)$ of $\A$, then $\T\cap\W$ is a torsion class of $\W$.  
\end{lemma}
\begin{proof}
	The aisle $\U_{\T}$ of $\db{\A}$ determined by $\T$ satisfies that $\D^{\leq 0}[1]\subseteq \U_{\T}\subseteq \D^{\leq 0}$. By Lemma \ref{lem:res}, we have that $\D^{\leq 0}\cap \ps$ is an aisle of  $\ps$. By Lemma \ref{lem:glue-t}, $\U_\T\cap \ps$ is an aisle of $\ps$. Thus, we have the following inclusions of aisles of $\ps$
	\[(\D^{\leq 0}\cap \ps)[1]\subseteq \U_{\T}\cap \ps \subseteq \D^{\leq 0}\cap \ps.\] On the other hand, the heart of the t-structure $(\D^{\leq 0}\cap \ps, \D^{\geq 0}\cap \ps)$ is $\W$, see Lemma \ref{lem:res}. Thus $\U_{\T}\cap \ps\cap \W=\T\cap \W$ is a torsion class of $\W$ by Proposition \ref{prop:int}. 
\end{proof}

The subcategory $\T':=\T\cap \W$ is a torsion class in $\W$. Write the corresponding torsion-free class as $\F'$. The category $\B'=\F'[1]\ast \T'$ is the HRS-tilt of $\W$.
\begin{remark}
	The torsion class $\T'$ in $\W$ is given by $\T\cap\W$ but the corresponding torsion free class $\F'\neq \F\cap \W$ in general.
\end{remark}
\begin{lemma}\label{lem:tred}
	Keep the notation as above, we have that $T^*_S(\B)\cong\B'$. Moreover, $T^*_S(\T)\cong \T'$.
\end{lemma}
\begin{proof}By Lemma \ref{lem:glue-t}, the t-structure $(\U_\T,\V_{\T})$ is glued from t-structure $(\U_{\T}\cap\ps, j^*(\V_{\T}))$ on $\ps$ and $\langle S\rangle$. Thus the heart of the t-structure $j^*(\U_\T)\cap j^*(\V_{\T})$ is $j^*(\B)=T^*_S(\B)$. By Lemma \ref{lem:res} and Lemma \ref{lem:tor}, the corresponding heart is also $\B'$. Thus we have 
	$T^*(\B)\cong \B'$.
	Note that $\T=\B\cap \U_\T$, we have that $T_S^*(\B\cap \U_\T)=\B'\cap \U'_\T=\T'$.
\end{proof}

\begin{remark}
In the recollement (\ref{eq:recm}), the t-structure of $\db\A$ by the HRS-tilt is glued along the functor $j^*$ from the t-structure of $\db\W$ determined by the corresponding HRS-tilt. While the standard t-structure of $\db{\A}$ is not glued from this recollement. By \cite[Theorem 6.3]{LV}, there exists another different recollement such that the standard t-structure is a glued t-structure. Such a recollement is obtained via reflecting of (3.1), see \cite{LVY, Jor}. In general, it is impossible to apply \cite[Theorem 6.4]{LVY} in the above setting.
\end{remark}

\subsection{Aisle determined by general torsion classes}
In this section, we set $\A$ the module category of hereditary algebra $\Lambda$ and $(\T,\F)$ a torsion pair of $\A$.  The HRS-tilt is $\B$.  For an $E\in \T$ is an Ext-projective object which is also Ext-projective in $\U_\T$. Let $S\in \B$ be the corresponding simple object.  For $\W=\A\cap \ps$, we have that $\ps$ is equivalent to $\db{\W}$ by Proposition \ref{prop:real}. Lemma \ref{lem:tor} shows that $\W$ admits a torsion pair $(\T',\F')$ with $\T'=\T\cap \W$. Let $\B'=\F'[1]\ast\T'$ the HRS-tilt of $\W$.
In this section, we will prove Proposition~\ref{prop:main} and Theorem \ref{thm:main}.  

Recall that for any $A,B\in \B$ the factorization of a morphism $A\to B[2]$ in $\db{\A}$ could be reduced to the factorization the morphism $T^*_S(A)\to T^*_S(B)[2]$ in $\ps$. This fact depends on the following result. 

\begin{proposition}\label{prop:red}\cite[Proposition 9.6]{SR16}
    Keep the notations as above. Assume that$\S\U_{\T}\subseteq\U_{\T}$. Given a morphism $A\xrightarrow{f}B[2]$ in $\db\A$with $A,B\in \B$ and $\Hom(A,S)=0=\Hom(S,B)$.
	If there exists an epimorphism $C'\to T^*_S(A)$ in $\B'$ such that the composition  $C'\to T^*_S(A)\xrightarrow{T^*_S(f)} T^*_S(B)[2]$, then there is an epimorphism $H^0_{\B}(C')\to A$ such that $H^0_{\B}(C')\to A\xrightarrow{f} B[2]$ is zero. 
\end{proposition} 

\begin{proposition}\label{prop:main}
	If the torsion pair $(\T',\F')$ of $\W'$ is effaceable, then $(\T,\F)$ is effaceable in $\A$.
\end{proposition}
\begin{proof}
	By Proposition \ref{prop:ff2}, it is equivalent to prove any map $f\colon F\to T[1]$ with $F\in \F, T\in\T$ factors through some morphism $C\to T[1]$ with $C\in \B$.
	Note that $F[1]\in \B$ and $T\in \B$, we claim that the morphism $f[1]\colon F[1]\to T[2]$ factors through some $C[1]\to T[2]$ for $C\in \B$. 
	
	By Lemma \ref{lem:hom} and Proposition \ref{prop:fac}, there exists $C_M$ and $N$ in $\B$ satisfying that $\Hom(C_M,S)=0=\Hom(S,N)$ such that $C_M\to N[2]$ factors through $C'[1]\to N[2]$ with $C'\in \B$ implies that $M\to K_T[2]$ factor through $B''[1]\to K_T[2]$ with $B''\in \B$. 
    Indeed, we have $T^*_S(C_M), T^*_S(N)\in \B'$ by Lemma \ref{lem:tred}. Since $(\T',\F')$ is effaceable in $\W$, by Proposition~3.3, the morphism $T^*_S(C_M)\to T^*_S(N)[2]$ factors as $T^*_S(C_M)\to C''\to T^*_S(N)[2]$ with $C''\in \B'$. By Proposition \ref{prop:red}, the morphism $C_M\to N[2]$ factors as $C_M\to C'[1]\to N[2]$ with $C'\in \B$. 
	By Lemma \ref{lem:conM}, there exists an object $M\in \B$ such that $M\to K_T[2]$ factor through $B''[1]\to K_T[2]$ implies that $F[1]\to T[2]$ factors though $C[1]\to T[2]$ with $C\in \B$.   
\end{proof}

\begin{theorem}\label{thm:mainr}
	Let $\Lambda$ be a finite-dimensional hereditary algebra. Assume $\T$ is a torsion class in $\mod \Lambda$ and the aisle $\U_\T$ is closed under the Serre functor. Then the torsion pair $(\T,\F)$ is effaceable.
\end{theorem}
\begin{proof}
	If $\T$ is finitely generated, then $(\T,\F)$ is effaceable by Lemma \ref{lem:enough}.
	By Lemma~\ref{lem:torp}, if $\T$ has no Ext-projective objects, then $(\T,\F)$ is effaceable.
	
	If $\T$ is not finitely generated and has non-zero Ext-projective objects, then the aisle $\U_\T$ has only finitely many indecomposable Ext-projectives \cite[Theorem 2.3]{AJS} and so is $\T$. We take an indecomposable Ext-projective in $E$ in $\T$, and $S$ is the simple top of $E$ in $\B$. By Lemma \ref{lem:tor}, $\T\cap \W$ is a torsion class in $\W$ and has one fewer Ext-projectives.   Moreover, $\U_{\T'}$ in $\db{\W}$ has one fewer Ext-projective objects. 
	
	By Proposition \ref{prop:real}, the aisle $\U_{\T'}$ has image $\U_\T\cap \ps$ under the realization functor $\Phi\colon \db{\W}\to \ps$, which is an aisle of $\ps$. By Proposition \ref{prop:main}, if $(\T',\F')$ is effaceable, then $(\T,\F)$ is effaceable. By induction,  there is an exceptional sequence $\{S_0,\ldots, S_l\}$ such that $\U_{\T}\cap  \lsup{\perp}\langle S_0,\ldots, S_l\rangle$ has no Ext-projectives.  By Lemma \ref{lem:torp}, the corresponding torsion pair is effaceable.  Applying Proposition \ref{prop:main} $l$-times, we have that $(\T,\F)$ is effaceable.
\end{proof}

\begin{proof}[Proof of Theorem \ref{thm:main}]
``(1)$\Rightarrow$ (2)" This is Theorem \ref{thm:mainr}.

``(2)$\Rightarrow$ (1)"  This is  case of Theorem \ref{thm:suffr} for $\A$ being module category of an hereditary algebra $\Lambda$.
\end{proof}
\begin{remark}
	In \cite{N}, Neeman considered the problem when a long exact sequence is obtained from the cohomology of a triangle in a derived category.
	Given a 5-term exact sequence 
	\[0\to X_0\to X_1\to X_2\to X_3\to X_4\to 0  \] in abelian category $\A$, this sequence defines a class in $\Ext^3(X_4,X_0)$. Neeman showed that the long exact sequence is obtained from taking the cohomology of a triangle in $\db{\A}$ if and only if the equivalent class in $\Ext^3(X_4,X_0)$ given by the 5-term exact sequence under the realization functor vanishes. It looks interesting to find the relationship between the Serre functor and the effaceable property of the torsion class in a more general case.
\end{remark}



\vskip 5pt

\noindent {\bf Acknowledgements}\quad  Zhe Han is supported by the National Natural Science Foundation of China (No.12001164), Ping He is supported by the National Natural Science Foundation of China (No.1250011863). The authors would like to thank Xiao-Wu Chen, Yifei Cheng, and Yiyu Cheng for their discussions and comments. We special thank Yu Zhou for his inspirational discussions and suggestions.

\end{document}